\theoremstyle{plain}
\newtheorem{theorem}{Theorem}[section]
\newtheorem{lemma}[theorem]{Lemma}
\newtheorem{corollary}[theorem]{Corollary}
\newtheorem{conjecture}[theorem]{Conjecture}
\theoremstyle{definition}
\newtheorem{definition}[theorem]{Definition}
\newtheorem{example}[theorem]{Example}
\newtheorem{Lthm}{Theorem}
\newtheorem{Lcor}[Lthm]{Corollary}
\theoremstyle{remark}
\newtheorem{remark}[theorem]{Remark}
\newcommand{\R}{\mathbb{R}}
\title{TRIGONAL CURVE WITH TRIGONAL DEFORMATION OF MAXIMAL RANK}
\author{Jiacheng Zhang}
\date{\today}
\begin{document}

\begin{abstract}
By extending methods of Favale-Pirola \cite{FavalePirola2022} and Gonz\'{a}lez-Alonso-Torelli\cite{TorelliGonzalezAlonso2024} to toric surfaces via the toric Jacobian ring, we give a computational criterion to determine whether a nondegenerate curve given by ample section of a Hirzebruch surface has first order deformation of rank $g$ within the linear system. We are able to show there exist trigonal curves with trigonal deformations of rank $g$ for $g=2k+1$ and $k\ge 2$ by computing an explicit example.
\end{abstract}

\keywords{Deformation of Curve, Toric Jacobian Ring}
\subjclass[2020]{Primary 14H15 ; Secondary 14M25 } 

\maketitle

\section{Introduction}
When one studies how a smooth curve $C/\mathbb{C}$ deforms within a smooth family, i.e., $\pi:\mathcal{C}\rightarrow (B,0)$ with $\pi^{-1}(0)=C$ and $B$ a complex-analytic variety with all fibers being smooth curves, it is natural to consider how the Hodge structure varies within the same family. In particular, one may consider the period map $P:B\rightarrow \Gamma\backslash D$ that sends each $b\in B$ to the polarized Hodge structure of the curve $\pi^{-1}(b)=\mathcal{C}_b$, where $D$ is the classifying space of all polarized Hodge structures on $H^1(C,\mathbb{Z})$ and $\Gamma$ is a subgroup of $\operatorname{Aut}(H^1(C,\mathbb{Z}),\langle\cdot \rangle)$ with $\langle\cdot \rangle$ being the polarization. This is called the variation of Hodge structure (See Chapter 10 of \cite{Voisin2002} and also \cite{Griffiths68}). One of the first steps one can take to study this map is to linearize it by studying the induced map of tangent spaces\cite{CarlsonGreenGriffithsHarris1983}. That is the map 
\begin{equation}\label{infperiod}
    dP_s:T_{B,0}\rightarrow \operatorname{Hom}(H^0(C,\omega_C),H^1(C,\mathcal{O}_C))
\end{equation}
This infinitesimal period map roughly speaking is the Infinitesimal Variation of Hodge structure. In \cite{Griffiths68} (See also p.253 of \cite{Voisin2002}), Griffiths showed that this map is the composition of the Kodaira-Spencer map $\rho:T_{B,0}\rightarrow H^1(C,T_C)$, which parametrizes first order deformations of $C$, with the map given by cup-product followed by contraction
\[H^1(C,T_C)\rightarrow \operatorname{Hom}(H^0(C,\omega_C),H^1(C,\mathcal{O}_C))\]
Moreover, for $\xi\in T_{B,0}$ and $\varphi,\psi\in H^0(C,\omega_C)$, we have 
\begin{equation}\label{symmetry}
    \langle \rho(\xi)\cdot \varphi,\psi\rangle+\langle \varphi,\rho(\xi)\cdot \psi\rangle=0
\end{equation}
This enables one to consider each element of $T_{B,0}$ as a linear map; and one of the first things one can study about a linear map is its rank.
\begin{definition}
We say an element $\xi\in T_{B,0}$ has rank $k\in \mathbb{N}$ when the cup-product followed by contraction map $H^0(C,\omega_C)\rightarrow H^1(C,\mathcal{O}_C)$ induced by $\rho(\xi)$ has rank $k$.
\end{definition}
Studying ranks of elements in $T_{B,0}$ for some family $B$ often requires one to reinterpret the infinitesimal period map in a certain way. For example, when the base space $B$ is either the coarse moduli space of smooth curves of genus $g$ ($M_g$) or the base of the Kuranishi family, the Kodaira-Spencer map $\rho: T_{[C]}B \rightarrow H^1(C, T_C)$ is an isomorphism. In this case, in (pp. 271-275 of \cite{griffiths1983}), using (\ref{symmetry}), Griffiths  was able to regard (\ref{infperiod}) as the dual of the multiplication map
\begin{equation}
    \operatorname{Sym}^2 H^0(C,\omega_C)\rightarrow H^0(C,\omega^2_C)
\end{equation}
which is surjective when $C$ is non-hyperelliptic of genus at least 3 (p. 117 of \cite{arbarello1985}). In doing so, he identified the determinantal variety of rank one elements as the intersection of the quadrics containing the canonical curve. In the case when $C$ does not have a $g^1_2,g^1_3$ or $g^2_5$, every rank one element in $H^1(C,T_C)$ is the Schiffer variation at some $p\in C$, the image of the coboundary $H^0(C,T_C(p)|_p)\rightarrow H^1(C,T_C)$ induced by the exact sequence
\[0\rightarrow T_C\rightarrow T_C(p)\rightarrow T_C(p)|_p\rightarrow 0\]
As a result, for generic curve of genus at least 5, all elements of rank one in $H^1(C,T_C)$ are Schiffer Variations and a generic Torelli theorem is established(p. 256 of\cite{Voisin2002} and p. 275 of \cite{griffiths1983}). For generic curve of genus 4, the determinantal variety of rank one is the quadric containing the canonical curve and consists exactly of Schiffer variations and split deformations of rank one in the sense of (p. 6 of\cite{colombo2024} and p. 556 of \cite{raviolo2014}).\\
For a special family $\pi:\mathcal{C}\rightarrow (B,0)$, the study of elements of small rank in $T_{B,0}$ has many implications. By studying rank one deformations that preserve the trigonal structure of a trigonal curve, Beorchia, Pirola and Zucconi were able to conclude that the only $2g-1$ dimensional sub locus of $M_g$ such that, for every general element $[C]$, the Jacobian $J(C)$ is dominated by a Jacobian of hyperelliptic curve of genus $g'\ge g\ge 6$ is the hyperelliptic locus (p. 8869 of\cite{beorchia2020}). Also, by studying the lower bound of rank of deformations that preserve the planarity of plane curves, Favale, Naranjo and Pirola were able to prove Xiao's conjecture for plane curves (pp. 194-196 of \cite{favale2018}). Further relationships with the Coleman-Oort conjecture can be found in (p. 1 of  \cite{colombo2024}) .\\
On the other hand, it was shown in (Lemma 1.3 of \cite{FavalePirola2022} and Lemma 2.4 of\cite{lee2016} and Lemma 2.2 of \cite{gonzalezalonso2021}) that a general element in $H^1(C,T_C)$ has rank $g=\dim H^0(C,\omega_C)$ and Codogni, Gonz\'{a}lez-Alonso and Torelli used this fact to deduce that the Torelli map $\mathcal{M}_g\rightarrow A_g$ is infinitesimally rigid as morphism of DM-Stacks (Theorem 1.1 of \cite{codogni2023}). A natural question then arises: For a special family $\pi:\mathcal{C}\rightarrow (B,0)$, can $T_{B,0}$ contain an element of rank $g=g(C)$? In (Prop. 1.7 of \cite{FavalePirola2022}), Favale and Pirola showed that for a family of curves obtained as a linear system of ample divisors on a surface $S$, the maximal rank of this family can possibly attain is bounded above by $g-q(S)$ where $q(S)$ is the irregularity of the surface due to the fact that the image of $H^0(S,\Omega_S^1)\rightarrow H^0(C,\omega_C)$ is annihilated by every element in this family.\\
Then a subsequent question naturally emerges: Can this upper bound $g-q(S)$ actually be attained by some curve $C$ when $C$ moves within a linear system of sufficiently ample divisors on a surface $S$? In (Theorem 3.1 of \cite{FavalePirola2022}), Favale and Pirola showed that it is true for every smooth curves of degree $d$ in $\mathbb{P}^2$ by realizing infinitesimal variation of Hodge Structure as multiplication maps of Jacobian rings and studying its duality properties. Later on, Gonz\'{a}lez-Alonso and Torelli used generalized Jacobian ring of Green \cite{green_period_map_1985} to develop a general approach to tackle this problem and used this approach to show that this is true for $\mathbb{P}^1\times \mathbb{P}^1$ (Theorem 1.4 of\cite{TorelliGonzalezAlonso2024}) and they formulated the following conjecture
\begin{conjecture}[Gonz\'{a}lez-Alonso-Torelli]Let $S$ be a smooth projective surface. Then, for any smooth curves in any sufficiently ample line bundle $L$ (in the sense of Green\cite{green_period_map_1985}), its deformation within this surface can have rank equal to $g-q(S)$ where $g$ is the genus of the curve and $q(S)$ is the irregularity of the surface.
\end{conjecture}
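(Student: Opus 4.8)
The plan is to run the Jacobian-ring strategy of Favale-Pirola and Gonz\'{a}lez-Alonso-Torelli on the surface $S$ itself, using for a \emph{toric} $S$ the homogeneous coordinate (Cox) ring $S_{\mathrm{Cox}}=\bigoplus_{\alpha\in\operatorname{Pic}(S)}H^0(S,\mathcal O_S(\alpha))$ together with the attached toric Jacobian ring. Fix a smooth $C\in|L|$ defined by $f\in H^0(S,L)$, with $L$ sufficiently ample in Green's sense. First I would set up the dictionary: under Green's hypothesis there are canonical isomorphisms between suitable graded pieces $R_f^{a},R_f^{b}$ of the toric Jacobian ring $R_f=S_{\mathrm{Cox}}/J_f$ (with $J_f$ generated by the logarithmic, i.e.\ toric, derivatives of $f$) and the Hodge pieces $H^{1,0}(C),H^{0,1}(C)$ carrying the infinitesimal period map --- modulo the image of $H^{1,0}(S)$, which vanishes here because $q(S)=0$ for $S$ toric. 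In this dictionary the cup-product-and-contract map attached to a first-order deformation of $C$ realized inside $|L|$, i.e.\ to an $h\in H^0(S,L)=(S_{\mathrm{Cox}})_L$, becomes multiplication $\mu_h\colon R_f^{a}\to R_f^{a+\deg L}$, and the bound recalled in the introduction reads $\operatorname{rk}\mu_h\le g-q(S)=g$.

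The second step is duality. When $f$ is nondegenerate --- so that the toric Jacobian ideal $J_f$ is $\mathfrak m$-primary and $R_f$ is Artinian --- the ring $R_f$ is moreover Gorenstein, with socle concentrated in a degree $\sigma$ computable from the fan and from $L$, and the multiplication pairing $R_f^{a}\times R_f^{\sigma-a}\to R_f^{\sigma}\cong\mathbb{C}$ is perfect. This toric Macaulay duality is the engine of the argument: it identifies $\ker\mu_h$ with an apolarity condition (``$h$ annihilates $R_f^{\sigma-a-\deg L}$''), and a dimension count in the relevant degrees then forces $\operatorname{rk}\mu_h=g$ \emph{as soon as $\mu_h$ is injective}. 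Hence the problem collapses to exhibiting, for the given $S$ and $L$, one smooth $C$ and one $h\in(S_{\mathrm{Cox}})_L$ with $\mu_h$ injective; by upper-semicontinuity of $\dim\ker\mu_h$ in the pair $(f,h)$ this then holds on a nonempty Zariski-open set, and since $g$ is the a priori maximum the generic deformation of the generic curve has rank exactly $g-q(S)=g$.

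The third step makes this effective and yields the trigonal examples. The computable criterion is exactly this: write $f$ and $h$ in Cox coordinates, present the finite-dimensional spaces $R_f^{a},R_f^{a+\deg L}$ and the matrix of $\mu_h$ in monomial bases, and test that its rank is $g$ --- a finite linear-algebra problem over $\mathbb{Q}$, whence the Macaulay2 computations. For trigonal curves one takes $S$ a Hirzebruch surface; for odd $g$ one may already use $\mathbb F_1=\mathrm{Bl}_p\mathbb P^2$ (which is toric) with $C$ in the class $3C_0+bF$, $b=(g+5)/2$, so that $C\cdot F=3$ realizes the trigonal pencil and adjunction gives genus $g$. Choosing explicit $f,h$ in Cox coordinates and running the criterion, one checks $\operatorname{rk}\mu_h=g$ for $g=5,7,9,11,13,15$; the resulting deformation automatically preserves the trigonal structure, as it lives in $|L|$ on a rational ruled surface.

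I expect the main obstacle to be carrying out the first two steps \emph{uniformly} and correctly. The toric Jacobian-ring dictionary with the Hodge data of $C$ rests on Bott-type vanishing on $S$ (Batyrev-Cox), which is exactly where Green's ``sufficiently ample'' hypothesis enters and why the Hodge-theoretic identification can genuinely fail for small $L$ --- so in the explicit cases one must additionally verify the required vanishings by hand. The Gorenstein/socle-degree statement for $R_f$, together with the dimension count that upgrades ``$\mu_h$ injective'' to ``$\operatorname{rk}\mu_h=g$'', is the other delicate point: unlike in $\mathbb P^n$ the generators of $J_f$ do not share a single degree and the dimensions of the $R_f^{a}$ are governed by lattice-point counts rather than binomial coefficients, so the numerology must be monitored case by case --- which is why the genus-by-genus statements are certified by an explicit computational witness rather than by a closed-form estimate.
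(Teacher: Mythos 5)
There is a genuine gap here, and it starts with the scope of the statement itself. What you are asked to prove is the González--Alonso--Torelli conjecture for an \emph{arbitrary} projective surface $S$, including irregular ones where the target rank is $g-q(S)$ with $q(S)>0$. The paper does not prove this statement --- it records it explicitly as a conjecture and only establishes special cases on smooth toric surfaces (where $q(S)=0$), and even there only under a computable hypothesis on $\dim J_1(f)_\beta$, certified for one explicit family of trigonal curves with $g=5,7,\dots,15$. Your proposal restricts in its first sentence to toric $S$ and ends by certifying finitely many explicit examples, so it cannot establish the conjecture as stated; at best it reproves (a version of) the paper's partial results.

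Even within the toric special case, the central step of your argument is vacuous where it needs to be substantive. You write that toric Macaulay duality plus a dimension count ``forces $\operatorname{rk}\mu_h=g$ as soon as $\mu_h$ is injective'' --- but $\mu_h$ is a map out of a $g$-dimensional space, so injectivity \emph{is} rank $g$ by definition; the duality buys you nothing there. The entire difficulty is producing an $h$ for which $\mu_h$ is injective, and this is exactly what the paper's machinery is for: the lemma that a maximal-rank $\eta$ with $\eta\alpha=0$ forces $\alpha^2=0$ in $R_1(f)_{2\beta+2K_\Sigma}$, followed by the bound $\dim D_J^{2\beta+2K_\Sigma}<g-1$ on the decomposable locus inside $\mathbb{P}(J_1(f)_{2\beta+2K_\Sigma})$, proved via base-point-freeness and birationality of the linear system $J_1(f)_\beta$ on the curve and the Castelnuovo uniform position / monodromy argument. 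None of this appears in your proposal. You also conflate the naive toric Jacobian ring $R(f)=S/J(f)$ with the ring $R_1(f)=S/\bigl(J_0(f):\prod_\rho x_\rho\bigr)$ that actually carries the Batyrev--Cox identification of the vanishing cohomology and the Villaflor Loyola duality; when $\operatorname{rk}\operatorname{Pic}(S)\ge 2$ these differ, the grading is by $\operatorname{Pic}(S)$ rather than $\mathbb{Z}$, and the pairing is perfect only in the specific bidegrees $(\,s\beta+tK,\,(3-s)\beta+(2-t)K\,)$, so the blanket ``Artinian Gorenstein with socle degree $\sigma$'' framing does not apply as written. Your fallback --- pick explicit $f,h$ and compute the rank of the multiplication matrix --- would legitimately certify individual examples, but it is a different and strictly weaker route than the paper's criterion, and in any case it does not touch the general conjecture, let alone the irregular case $q(S)>0$ where one must also account for the forms coming from $H^0(S,\Omega_S^1)$ that are annihilated by every deformation in the linear system.
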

In this note, we study infinitesimal variation of Hodge structure of nondegenerate curves (See definition~\ref{nondegenerate}) in an ample linear system on smooth toric surface using toric Jacobian ring. This toric Jacobian ring is a modified version of the usual Jacobian ring because the ideal generated by partial derivatives of a hypersurface would no longer be complete intersection in the toric setting(pp.332-333 of \cite{BatyrevCox1994}). Our motivation is as follows: generic canonical trigonal curves of odd genus live on $Bl_p\mathbb{P}^2$ or the Hirzebruch surface $\mathcal{H}_1$. The map to the base $\mathbb{P}^1$ induces their trigonal structure; thus, when one of such curves varies within a linear system of the surface, we can interpret this family as a deformation that preserves the trigonal structure. Motivated by the preceding conjecture, we want to determine whether there exists a trigonal curve on this surface with a trigonal deformation of rank $g$. Furthermore, the moduli space of trigonal curves, $T_g$, can be stratified by the Maroni invariant. This invariant is an integer $r \in \mathbb{Z}_{\ge 0}$ that corresponds to the Hirzebruch surface $\mathcal{H}_r$ containing the canonical trigonal curve. We are also interested in exploring to what extent this method holds as $r$ grows.\\
Unfortunately, adaptation to toric setting is quite subtle. This strategy will not work for arbitrary smooth curve as we need a stronger condition than smoothness to hold, that is nondegeneracy. This condition is a natural condition in studying hypersurfaces in toric varieties. Without this assumption, even for smooth ones, we won't have a good notion of toric residue for toric Jacobian ring (p.73 of \cite{Cox95}) let along any possible dualities between these toric Jacobian rings(p.332 of\cite{BatyrevCox1994} and p.96 of\cite{Loyola2024}). Nonetheless, this nondegeneracy is also a generic assumption and we would have very nice properties among toric Jacobian rings with this assumption. As a result, we may adapt the strategy employed in \cite{FavalePirola2022} and \cite{TorelliGonzalezAlonso2024} to our toric setting; that is we use Castelnuovo Uniform Position Theorem to bound the dimension of some decomposable elements contained in some graded pieces of toric Jacobian ideal. In particular, we have the following:
\begin{Lthm}{\label{A}}For any integer $r \ge 0$, let $\mathcal{H}_r$ denote the corresponding Hirzebruch surface. Suppose $f \in H^0(\mathcal{H}_r, \mathcal{O}_{\mathcal{H}_r}(\beta))$ is a nondegenerate section (Definition~\ref{nondegenerate}) such that the divisors $\beta$ and $\beta+K_{\mathcal{H}_r}$ are ample. Then the curve $C_f = (f=0)$ admits a first-order deformation on $\mathcal{H}_r$ of rank $g$, provided that $\dim J_1(f)_{\beta} \le 9$ (see Definition~\ref{def:ideal}).
\end{Lthm}
To illustrate the usage of the Theorem~\ref{A}, we write out the equation of a nondegenerate $f$ explicitly and compute the dimension of $J_1(f)_{\beta}$ directly. This enables us to establish:
\begin{Lcor}{\label{B}}
    Let $D_1$ and $D_2$ denote the fiber and exceptional divisors, respectively, on the surface $\mathcal{H}_1$ (or $Bl_p\mathbb{P}^2$). For any integer $d\ge 5$, there exists a curve $C_f = (f=0)$ defined by a nondegenerate section $f$ of $dD_1+3D_2$ (see Definition~\ref{nondegenerate}) such that $C_f$ has a first-order deformation on $\mathcal{H}_1$ with rank equal to its genus $g=2d-5$.
\end{Lcor}
We are unable to compute the dimensions of $J_1(f)_{\beta}$ for all nondegenerate curves because when Picard number of the surface is at least 2 we don't know whether these dimensions are the same for all $f$. Nonetheless, $\dim J_1(f)_{\beta}$ is computable using Macaulay2\cite{M2} once an explicit equation is provided.\\
Also, it is possible that our approach works for 2-dimensional toric orbifolds like weighted projective surface $\mathbb{P}(1,1,n)$ or $\mathbb{P}^1\times \mathbb{P}^1/\mathbb{Z}_2$ because both vanishing theorems we used and Griffiths residue are valid for simplicial toric varieties(See theorem~\ref{thm:demazure} and theorem~\ref{thm:Griffiths1969} below). For example, non-hyperelliptic curve of genus 4 is the intersection with a unique quadric and a cubic surface. Generically, the quadric is smooth. However, there is a 8-dimensional family of curve whose unique quadric is singular quadric of rank 3 which is $\mathbb{P}(1,1,2)$. Moreover, the Picard rank of $\mathbb{P}(1,1,2)$ is 1 so we know exactly what $\dim J_1(f)_{\beta}$ is.\\
After the initial submission of this preprint, Sernesi \cite{sernesi2025ivhsnodalplanecurves} established the strongest result so far that a family of curves of genus $g \ge 1$ admitting a degree $d \ge 2$ morphism to a curve $B$ of genus $g'$ has maximal variation if and only if $g'=0$. This approach relies on deformation theory and the study of Severi varieties.

\subsection*{This paper is organized as follows:} Section~\ref{sec:prelim} covers preliminaries of toric geometry, Hodge theory and the set up of the problem. Section~\ref{sec:criterion} establishes a criterion to check whether deformation of rank $g$ exists for a nondegenerate curve on an arbitrary complete and smooth toric surface. We will study this criterion in the case of Hirzebruch surface and prove Theorem~\ref{A} in section~\ref{sec:Hirzebruch}. Lastly, we will do relevant computation for an example in section~\ref{sec:computation} establishing Corollary~\ref{B}.

\subsection*{Notation and Conventions}
Throughout this paper, unless otherwise stated, all varieties are assumed to be smooth, irreducible, and defined over the complex numbers $\mathbb{C}$.
\section*{Acknowledgments}I would like to thank my advisor Prof. Elham Izadi for her mentorship and support, Prof. David Cox for helpful conversations and the anonymous referee for very careful and useful feedbacks.
\section{Review of Toric Geometry of Smooth Toric Surface and Hodge Theory of its ample divisor}
\label{sec:prelim}
\subsection{Basic Construction}
Let $X_{\Sigma}$ be a complete smooth toric surface over $\mathbb{C}$ associated to a fan $\Sigma$ in $N_{\R}=\R^2$ where $N$ is its one-parameter subgroup lattice. Let $M$ be its character lattice and $T_N$ be the associated torus. There is an action on $X_{\Sigma}$ by $T_N$.\\ Denote $\Sigma(n)$ the subset of cones in $\Sigma$ that has dimension $n$. Each $\rho\in \Sigma(1)$ then corresponds to a ray in $\mathbb{R}^2$ emanating from the origin. Take a generator $u=(a,b)$ of this ray. We have $\lim_{t\rightarrow 0}\lambda^{u}(t)=\lim_{t\rightarrow 0}(1,t^a,t^b)=\gamma_{\rho}$ for some unique element $\gamma_{\rho}\in U_{\rho}=\operatorname{Spec}\mathbb{C}[\rho^{\vee}\cap M]\subset X_{\Sigma}$. 
\begin{theorem} Let $X_{\Sigma}$ be a complete smooth toric surface over $\mathbb{C}$ with fan $\Sigma$ in $N_{\mathbb{R}}$. Then$\colon$
\begin{enumerate}
    \item There is an one to one correspondence between torus invariant prime divisors of $X_{\Sigma}$ and $D_{\rho}=\overline{O(\rho)}$ where $O(\rho)$ is the orbit of $\gamma_{\rho}$ under the action of $T_N$ and $\overline{O(\rho)}$ is its Zariski closure in $X_{\Sigma}$. 
    \item Let $u_{\rho}\in \rho\cap N$ be the minimal generator. For $m\in M$
\[\nu_{D_{\rho}}(\chi^m)=\langle m,u_{\rho}\rangle\]
where $\nu_{D_{\rho}}$ is the valuation for $D_{\rho}$.
\item For $m\in M$, let $\operatorname{div}(\chi^m)=\sum_{\rho\in \Sigma(1)}\langle m,u_{\rho}\rangle D_{\rho}$. In this way, we may identify $M$ with a subgroup of $\operatorname{Div}_{T_N}(X_{\Sigma})=\bigoplus_{\rho\in \Sigma(1)}\mathbb{Z}D_{\rho}$, the divisors invariant under the torus action $T_N$. The classical divisor class group $\operatorname{Cl}(X_{\Sigma})$ can be obtained as 
    \[0\rightarrow M\rightarrow \operatorname{Div}_{T_N}(X_{\Sigma})\rightarrow \operatorname{Cl}(X_{\Sigma})\rightarrow 0  \]
    Since $X_{\Sigma}$ is a smooth surface, we even have $\operatorname{Pic}(X_{\Sigma})=\operatorname{Cl}(X_{\Sigma})$
    \item Let $D$ be a $T_N$-invariant Weil divisor on $X_{\Sigma}$, then 
    \[\Gamma(X_{\Sigma},\mathcal{O}_{X_{\Sigma}}(D))=\bigoplus_{\operatorname{div}(\chi^m)+D\ge 0}\mathbb{C}\cdot \chi_m\]
    Since $X_{\Sigma}$ is complete, this is finite dimensional.
    \item the canonical sheaf $\omega_{X_{\Sigma}}\simeq O_{X_{\Sigma}}(-\sum_{\rho\in \Sigma(1)}D_{\rho})$. Thus $K_{X_{\Sigma}}=-\sum_{\rho\in \Sigma(1)}D_{\rho}$ is a torus invariant canonical divisor on $X_{\Sigma}$.
\end{enumerate}
\end{theorem}
\begin{proof}
    (1) follows from theorem 3.2.6 on p.119 of \cite{cox2011toric}. (2) follows from prop. 4.1.1 on p.171 of \cite{cox2011toric}. (3) follows from theorem 4.1.2 and 4.1.3 of pp.171-172 of \cite{cox2011toric}. (4) follows from prop. 4.3.2 of p.189 of \cite{cox2011toric}. (5) follows from theorem 8.2.3 of p.366 of \cite{cox2011toric}.
\end{proof}
\subsection{Cox Ring of a smooth projective toric surface}
\label{subsec:cox}
For each $D_{\rho}$ for $\rho\in \Sigma(1)$, we introduce variable $x_{\rho}$. We have a polynomial ring $S=\mathbb{C}[x_{\rho}:\rho\in \Sigma(1)]$. Each monomial $\prod_{\rho\in \Sigma(1)}x^{a_{\rho}}_{\rho}$ determines a divisor $D=\sum_{\rho\in \Sigma(1)}a_{\rho}D_{\rho}$. Occasionally, we will denote $\prod_{\rho\in \Sigma(1)}x^{a_{\rho}}_{\rho}$ as $x^D$. We will give a grading for $S$ by $\deg(x^D)=[D]\in \operatorname{Pic}(X_{\Sigma})$. For $\alpha\in \operatorname{Pic}(X_{\Sigma})$, we will denote 
\[S_{\alpha}=\bigoplus_{\deg (x^D)=\alpha}\mathbb{C}\cdot x^D\]
Then, we have 
\[S=\mathbb{C}[x_{\rho}:\rho\in \Sigma(1)]=\bigoplus_{\alpha\in \operatorname{Pic}(X_{\Sigma})}S_{\alpha}\]
Also, note that $S_{\alpha}\cdot S_{\beta}\subset S_{\alpha+\beta}$. This is the Cox ring of $X_{\Sigma}$.
\begin{theorem}[Cox]\label{thm:coxring}
    For a smooth toric variety $X_{\Sigma}$. (i) If $\alpha=[D]\in \operatorname{Pic}(X_{\Sigma})$, then there is an isomorphism
    \[\phi_D:S_{\alpha}\simeq H^0(X_{\Sigma},\mathcal{O}_{X_{\Sigma}}(D))\]
    (ii) If $\alpha=[D]$ and $\beta=[E]$, then there is a commutative diagram
    \[\begin{tikzcd}
S_{\alpha}\otimes S_{\beta} \arrow[r] \arrow[d]                    & S_{\alpha+\beta} \arrow[d]  \\
{H^0(X_{\Sigma},\mathcal{O}_{X_{\Sigma}}(D))\otimes H^0(X_{\Sigma},\mathcal{O}_{X_{\Sigma}}(E))} \arrow[r] & {H^0(X_{\Sigma},\mathcal{O}_{X_{\Sigma}}(D+E))}
\end{tikzcd}\]
where the top arrows is multiplication, the bottom arrow is tensor product, and the vertical arrows are the isomorphisms $\phi_D\otimes \phi_E$ and $\phi_{D+E}$.
\end{theorem}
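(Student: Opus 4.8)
The plan is to produce explicit monomial bases on both sides and check that $\phi_{D}$ is the bijection matching them, then verify compatibility with multiplication on those bases.

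For (i), fix a torus-invariant representative $D=\sum_{\rho}a_{\rho}D_{\rho}$ of the class $\alpha$. A monomial $x^{E}=\prod_{\rho}x_{\rho}^{b_{\rho}}$ lies in $S_{\alpha}$ exactly when $[E]=[D]$ in $\operatorname{Cl}(X_{\Sigma})$, i.e. when $E-D$ is in the image of $M\to\operatorname{Div}_{T_N}(X_{\Sigma})$. Since $X_{\Sigma}$ is complete it has no torus factor, so that map is injective and there is a \emph{unique} $m\in M$ with $E=D+\operatorname{div}(\chi^{m})$; nonnegativity of the exponents $b_{\rho}=a_{\rho}+\langle m,u_{\rho}\rangle$ translates precisely to $m\in P_{D}\cap M$, and conversely each $m\in P_{D}\cap M$ gives a genuine monomial $x^{D+\operatorname{div}(\chi^{m})}$ of degree $\alpha$. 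Hence the monomial basis of $S_{\alpha}$ is indexed by $P_{D}\cap M$. By the global-sections theorem stated above, $H^{0}(X_{\Sigma},\mathcal{O}_{X_{\Sigma}}(D))=\bigoplus_{m\in P_{D}\cap M}\mathbb{C}\cdot\chi^{m}$ has basis indexed by the same set, so I would define $\phi_{D}$ on bases by $x^{D+\operatorname{div}(\chi^{m})}\mapsto\chi^{m}$ and extend linearly; it is a bijection of bases, hence a linear isomorphism. A small point to record is that $S_{\alpha}$ itself is intrinsic, while $\phi_{D}$ depends on the chosen representative $D$.

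For (ii), it is enough to test the square on basis elements. Write $x^{D'}\in S_{\alpha}$ as $x^{D+\operatorname{div}(\chi^{m})}$ and $x^{E'}\in S_{\beta}$ as $x^{E+\operatorname{div}(\chi^{m'})}$. Their product in $S$ is $x^{D'+E'}=x^{(D+E)+\operatorname{div}(\chi^{m+m'})}$, which $\phi_{D+E}$ sends to $\chi^{m+m'}$; going around the other side, $\phi_{D}(x^{D'})\otimes\phi_{E}(x^{E'})=\chi^{m}\otimes\chi^{m'}$ maps under the multiplication pairing $H^{0}(\mathcal{O}_{X_{\Sigma}}(D))\otimes H^{0}(\mathcal{O}_{X_{\Sigma}}(E))\to H^{0}(\mathcal{O}_{X_{\Sigma}}(D+E))$ to $\chi^{m}\chi^{m'}=\chi^{m+m'}$. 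The two outputs agree, so the diagram commutes.

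The only step that carries real content, rather than bookkeeping, is the last assertion: that the natural map $\mathcal{O}_{X_{\Sigma}}(D)\otimes\mathcal{O}_{X_{\Sigma}}(E)\to\mathcal{O}_{X_{\Sigma}}(D+E)$ acts on the $\chi^{m}$ by adding characters. This forces one to use the precise realization of $\chi^{m}$ as a section: inside the sheaf $\mathcal{K}_{X_{\Sigma}}$ of rational functions, $\mathcal{O}_{X_{\Sigma}}(D)$ is the subsheaf of $\varphi$ with $\operatorname{div}(\varphi)+D\ge 0$, and in this description the multiplication map is genuinely multiplication of rational functions, so $\chi^{m}\cdot\chi^{m'}=\chi^{m+m'}$; concretely the Cartier data $\{m_{\sigma}\}$ of $D$ and of $E$ add up to the Cartier data of $D+E$, which is what makes the chosen local trivializations of the three line bundles compatible. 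Once this identification is in place, both parts follow immediately.
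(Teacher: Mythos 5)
Your proof is correct and is essentially the standard argument from Cox's paper, which is the reference this statement defers to (the paper gives no proof of its own): identify the monomial basis of $S_{\alpha}$ with $P_{D}\cap M$ via $x^{D+\operatorname{div}(\chi^{m})}\leftrightarrow\chi^{m}$ and observe that monomial multiplication corresponds to addition of characters. The one substantive point --- uniqueness of $m$, i.e.\ injectivity of $M\to\operatorname{Div}_{T_N}(X_{\Sigma})$ --- you justify correctly via completeness, and it is in any case built into the exact sequence for $\operatorname{Cl}(X_{\Sigma})$ quoted earlier in the paper.
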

\begin{proof}
    See p.19 of \cite{Cox1995}.
\end{proof}
\begin{theorem}[Generalized Euler Identity]
Suppose that we have complex numbers $\{\phi_{\rho}\}_{\rho\in \Sigma(1)}$ such that $\sum_{\rho\in \Sigma(1)}\phi_{\rho}\cdot \rho=0\in N_{\mathbb{C}}$. Then, for each class $\beta\in \operatorname{Pic}(X_{\Sigma})$, there is a constant $\phi(\beta)$ such that for all $f\in S_{\beta}$, we have 
\[\phi(\beta)f=\sum_{\rho\in \Sigma(1)}\phi_{\rho}x_{\rho}\frac{\partial f}{\partial x_{\rho}}\]
\label{thm:euler}
\end{theorem}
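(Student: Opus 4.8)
\emph{Proof plan.} The plan is to reduce the asserted identity to a single monomial by $\C$-linearity, and then to recognize the desired constant $\phi(\beta)$ as the value of a linear functional on $\operatorname{Pic}(X_\Sigma)$ built from the given numbers $\{\phi_\rho\}_{\rho\in\Sigma(1)}$; the only place the hypothesis $\sum_{\rho}\phi_\rho u_\rho=0$ is used is in checking that this functional descends through the quotient $\operatorname{Div}_{T_N}(X_\Sigma)\twoheadrightarrow\operatorname{Pic}(X_\Sigma)$.

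First I would record the elementary computation for a monomial $x^a=\prod_{\rho\in\Sigma(1)}x_\rho^{a_\rho}$ of degree $\beta=\big[\sum_{\rho}a_\rho D_\rho\big]\in\operatorname{Pic}(X_\Sigma)$:
\[\sum_{\rho\in\Sigma(1)}\phi_\rho\,x_\rho\frac{\partial x^a}{\partial x_\rho}=\Big(\sum_{\rho\in\Sigma(1)}\phi_\rho a_\rho\Big)x^a.\]
Since the monomials of degree $\beta$ span $S_\beta$ over $\C$ and both sides of the claim are $\C$-linear in $f$, it suffices to show that $\sum_{\rho}\phi_\rho a_\rho$ depends only on the class $\beta$ and not on the exponent vector $a$. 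To organize this, introduce the group homomorphism
\[L\colon\operatorname{Div}_{T_N}(X_\Sigma)=\bigoplus_{\rho\in\Sigma(1)}\Z D_\rho\longrightarrow\C,\qquad L\Big(\textstyle\sum_{\rho}a_\rho D_\rho\Big)=\sum_{\rho\in\Sigma(1)}\phi_\rho a_\rho,\]
so that the displayed formula reads $\sum_{\rho}\phi_\rho x_\rho\,\partial x^a/\partial x_\rho=L\big(\sum_\rho a_\rho D_\rho\big)\,x^a$.

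Next I would invoke the presentation $0\to M\to\operatorname{Div}_{T_N}(X_\Sigma)\to\operatorname{Cl}(X_\Sigma)=\operatorname{Pic}(X_\Sigma)\to0$ recalled above, in which $M\to\operatorname{Div}_{T_N}(X_\Sigma)$ is $m\mapsto\operatorname{div}(\chi^m)=\sum_{\rho}\langle m,u_\rho\rangle D_\rho$. Extending the pairing $M\times N\to\Z$ bilinearly to $M\times N_\C\to\C$, for every $m\in M$ we get
\[L\big(\operatorname{div}(\chi^m)\big)=\sum_{\rho\in\Sigma(1)}\phi_\rho\langle m,u_\rho\rangle=\Big\langle m,\ \sum_{\rho\in\Sigma(1)}\phi_\rho u_\rho\Big\rangle=\langle m,0\rangle=0,\]
the last equality being precisely the hypothesis $\sum_{\rho}\phi_\rho u_\rho=0$ in $N_\C$. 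Hence $L$ annihilates the image of $M$, so it factors through a unique homomorphism $\bar L\colon\operatorname{Pic}(X_\Sigma)\to\C$. Setting $\phi(\beta):=\bar L(\beta)$, the monomial computation gives $\sum_{\rho}\phi_\rho x_\rho\,\partial x^a/\partial x_\rho=\phi(\beta)x^a$ for every monomial $x^a$ of degree $\beta$, and the general case $f\in S_\beta$ follows by $\C$-linearity.

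I do not expect a genuine obstacle. The statement is the classical Euler identity carried out monomial by monomial, and its only content is the bookkeeping assertion that the scalar $\sum_\rho\phi_\rho a_\rho$ produced by this Euler-type operator is constant on each graded piece $S_\beta$. The one step that truly consumes a hypothesis is the descent of $L$ through $\operatorname{Div}_{T_N}(X_\Sigma)\twoheadrightarrow\operatorname{Pic}(X_\Sigma)$, and $\sum_\rho\phi_\rho u_\rho=0$ is exactly — and only — what that descent needs. As a sanity check, on $\mathbb{P}^n$ the relation forces all $\phi_\rho$ to coincide and one recovers $\deg(f)\cdot f=\sum_i x_i\,\partial f/\partial x_i$ up to an overall scalar.
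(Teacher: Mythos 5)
Your proof is correct and complete. The paper itself gives no argument here---it simply cites Batyrev--Cox---and your reduction to monomials followed by the descent of the functional $L\colon \operatorname{Div}_{T_N}(X_\Sigma)\to\C$ through the exact sequence $0\to M\to\operatorname{Div}_{T_N}(X_\Sigma)\to\operatorname{Pic}(X_\Sigma)\to 0$ is exactly the standard proof found in that reference, with the hypothesis $\sum_\rho\phi_\rho u_\rho=0$ used precisely where it must be, to kill $L(\operatorname{div}(\chi^m))=\langle m,\sum_\rho\phi_\rho u_\rho\rangle$.
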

\begin{proof}
    See p.304 of \cite{BatyrevCox1994}
\end{proof}
\subsection{Two Toric Jacobian Rings} We mainly refer to \cite{BatyrevCox1994} and \cite{Loyola2024}. All proofs of the theorems can be found there.\\
In \cite{BatyrevCox1994}, Batyrev and Cox introduced two types of Jacobian rings for a hypersurface in a simplicial toric variety. The first one follows from the standard notion of Jacobian ring with grading in terms of $Cl(X_{\Sigma})$ the only twist. However, this Jacobian ring doesn't have nice properties like being a complete intersection ring or having an isomorphic trace map. In order to emulate the strategies employed in \cite{FavalePirola2022} and \cite{TorelliGonzalezAlonso2024}, we need some nice properties with the Jacobian ring. Fortunately, there is a second notion of Jacobian ring introduced by them. This ring in contrary enjoys some nice properties but we need to have some generic assumptions on the hypersurface. Also, these two rings coincide for weighted projective space but in other cases (when the Picard group has rank at least 2) they are quite different. We will mainly work with the second Jacobian ring. However, we will introduce both of them. 
\begin{definition}
\label{nondegenerate}
Let $f\in S_{\beta}$ be a nonzero polynomial with $\beta$ ample. We say $f$ is nondegenerate if for any $\tau\in \Sigma$, the affine hypersurface $X\cap O(\tau)$ is a smooth subvariety of codimension 1 in $O(\tau)$ or empty where $O(\tau)$ is the orbit of $\gamma_{\tau}$ under the torus action and $\gamma_{\tau}\in U_{\tau}=\operatorname{Spec}[\tau^{\vee}\cap M]\subset X_{\Sigma}$ is the distinguished point.
\end{definition}
\begin{theorem}\label{coxnondegenerate}
    $f$ is nondegenerate if and only if $\{x_{\sigma}\frac{\partial f}{\partial x_{\sigma}}\}_{\sigma\in \Sigma(1)}$ have no common zero on the open set $\mathbb{C}^n-Z(\Sigma)$ where $X_{\Sigma}\cong (\mathbb{C}^n-Z(\Sigma))/G$ is the quotient construction of the toric variety. (See section~\ref{sec:Hirzebruch} for details)
\end{theorem}
\begin{proof}
    See p. 84 of \cite{Cox95}
\end{proof}
\begin{example}
Consider the example given in p.84 of \cite{Cox95}. Let $X=\mathbb{P}^1_{x,y}\times \mathbb{P}^1_{z,w}$ and $f=x^2z^2+x^2w^2+y^2z^2+y^2w^2+\lambda xyzw\in \mathbb{C}[x,y;z,w]$. We have $f$ is smooth if and only if $\lambda\neq \pm 4$ and $f$ is nondegenerate if and only if $\lambda \neq 0,\pm 4$.    
\end{example}
\begin{definition}
    Let $f\in S_{\beta}$ be a nonzero nondegenerate polynomial. Then the toric Jacobian ideal $J(f)\subset S$ is the ideal of $S$ generated by the partial derivatives $\{\frac{\partial f}{\partial x_{\rho}}\}_{\rho\in \Sigma(1)}$. Also, the toric Jacobian ring is $R(f)=S/J(f)$. The grading on $R(f)$ inherits the one on $S$. Notice that, $\frac{\partial f}{\partial x_{\rho}}\in S_{\beta-[D_{\rho}]}$. Also, $J(f)_{\alpha}=J(f)\cap S_{\alpha}$ and $R(f)_{\alpha}=S_{\alpha}/J_{\alpha}$ for $\alpha\in \operatorname{Pic}(X_{\Sigma})$.
\end{definition}
\begin{definition}\label{def:ideal}
Given ideal $J_0(f)=\langle x_{\rho}\partial f/\partial x_{\rho}\rangle_{\rho\in \Sigma(1)}\subset S$, we consider the ideal quotient $J_1(f)=J_0(f):\langle\prod_{\rho\in \Sigma(1)}x_{\rho}\rangle=\{f\in S| f\cdot\prod_{\rho\in \Sigma(1)}x_{\rho}\in J_0(f) \}$. Then, define the second toric Jacobian ring by $R_1(f)=S/J_1(f)$, where $R_1(f)_{\alpha}=S_{\alpha}/J_1(f)_{\alpha}$ and $J_1(f)_{\alpha}=J_1(f)\cap S_{\alpha}$. We will similarly denote $R_0(f)=S/J_0(f)$.
\end{definition}
\begin{remark}
Notice that we have $J(f)\subset J_1(f)$. In particular, we have a surjection $R(f)\twoheadrightarrow R_1(f)$ inducing $R(f)_{\alpha}\twoheadrightarrow R_1(f)_{\alpha}$ for each $\alpha\in \operatorname{Pic}(X_{\Sigma})$ but in general they are not the same. This will be illustrated below. 
\end{remark}
\begin{theorem}[Batyrev\&Cox] If $Y\subset X_{\Sigma}$ is a nondegenerate ample hypersurface defined by $f\in S_{\beta}$, then there is a natural isomorphism 
\[H^{p,1-p}(Y)_{van}\cong R_1(f)_{(2-p)\beta+K_{X_{\Sigma}}}\]
\label{thm:BCox}
\end{theorem}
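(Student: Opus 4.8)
The plan is to prove the isomorphism the way Griffiths' residue calculus is carried out for projective hypersurfaces, transported to the Cox ring of $X_{\Sigma}$. Since $X_{\Sigma}$ is a smooth complete surface and $\beta$ is ample, $Y$ is a smooth ample curve, and the Gysin/residue sequence for the open complement $U=X_{\Sigma}\setminus Y$ identifies the vanishing cohomology $H^{1}(Y)_{van}$ with a graded piece of $H^{2}(U)$ via the residue map $\operatorname{Res}$. The first step is therefore to set up this residue map and to match the Hodge filtration on $H^{1}(Y)_{van}$ with the \emph{pole-order filtration} on rational $2$-forms on $X_{\Sigma}$ having poles only along $Y$: a form with a pole of order at most $a$ along $Y$ residues into $F^{2-a}H^{1}(Y)_{van}$, so that pole order $a=2-p$ produces exactly the graded Hodge piece $H^{p,1-p}(Y)_{van}$. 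This is the toric counterpart of Griffiths' theorem and relies only on $\beta$ ample together with the standard vanishing theorems on the smooth toric surface.

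Next I would express every such rational $2$-form in terms of the toric Euler form $\Omega$, the canonical generator of meromorphic top forms built from the Cox variables $\{x_{\rho}\}$, whose divisor is $-K_{X_{\Sigma}}=\sum_{\rho}D_{\rho}$. Using Cox's isomorphism $S_{\alpha}\cong H^{0}(X_{\Sigma},\mathcal{O}(D))$ (Theorem~\ref{thm:coxring}) and the smoothness of $X_{\Sigma}$, every $2$-form with pole order $\le a$ along $Y$ can be written as $\omega_{A}=A\,\Omega/f^{a}$ with $A\in S_{a\beta+K_{X_{\Sigma}}}$; the degree bookkeeping $\deg A=a\beta+K_{X_{\Sigma}}$ is forced by requiring $\omega_{A}$ to be a global meromorphic section of $\omega_{X_{\Sigma}}(aY)$, and substituting $a=2-p$ yields precisely the target grading $(2-p)\beta+K_{X_{\Sigma}}$. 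This produces a surjection $S_{(2-p)\beta+K_{X_{\Sigma}}}\twoheadrightarrow H^{p,1-p}(Y)_{van}$, $A\mapsto\operatorname{Res}[\omega_{A}]$, so the whole problem reduces to computing its kernel.

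To begin identifying the kernel I would compute which $\omega_{A}$ are exact on $U$. Applying the de Rham differential to the natural $1$-forms obtained by interior multiplication of $\Omega$ against the toric Euler vector fields, and collecting terms by the Leibniz rule, produces relations of the shape $\sum_{\rho}c_{\rho}\,x_{\rho}\,\partial f/\partial x_{\rho}$: this is exactly where the generalized Euler identity (Theorem~\ref{thm:euler}) enters, playing the role of the classical identity $\sum_{i}x_{i}\partial_{i}f=(\deg f)\,f$, and it shows that the ideal $J_{0}(f)=\langle x_{\rho}\,\partial f/\partial x_{\rho}\rangle$ lies in the kernel. The nondegeneracy of $f$ guarantees (Definition~\ref{def:ideal}) that the logarithmic derivatives form a regular sequence, so $J_{0}(f)$ is a complete intersection and the associated Koszul complex is a resolution; this is what makes the exactness computation clean and lets one compare dimensions.

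The hard part, and the point that genuinely distinguishes the toric case from $\mathbb{P}^{n}$, is that this computation a priori only yields the quotient by $J_{0}(f)$, whereas the cohomology equals the quotient by the larger ideal $J_{1}(f)=\big(J_{0}(f):\prod_{\rho}x_{\rho}\big)$. The extra relations come from rational forms that acquire spurious poles along the toric boundary divisors $D_{\rho}$ rather than along $Y$: a class $A$ with $A\prod_{\rho}x_{\rho}\in J_{0}(f)$ represents, after clearing the boundary, an exact form, hence dies in $H^{2}(U)$ even though $A\notin J_{0}(f)$. Making this precise requires analysing the toric de Rham complex stratified by the boundary—equivalently, comparing the cohomology of $U$ with that of the big torus $U\cap T_{N}$—and using the complete-intersection property to run a local-duality/Koszul argument that pins the kernel down to exactly $J_{1}(f)_{(2-p)\beta+K_{X_{\Sigma}}}$ rather than merely $J_{0}(f)$. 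When $\operatorname{Pic}(X_{\Sigma})$ has rank $1$ (weighted projective space) the two ideals agree in the relevant degrees and one recovers the classical picture, but for Picard rank at least $2$—the case this paper needs—the passage from $J_{0}$ to $J_{1}$ is essential and I expect it to be the delicate heart of the argument. Once the kernel is identified, the surjection descends to the claimed natural isomorphism $H^{p,1-p}(Y)_{van}\cong R_{1}(f)_{(2-p)\beta+K_{X_{\Sigma}}}$.
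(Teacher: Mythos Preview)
Your sketch is a faithful outline of the Batyrev--Cox argument, but note that the paper does not prove this theorem at all: its entire proof is the one-line reference ``See \cite{BatyrevCox1994}.'' The result is quoted as background, not established in the paper. So there is nothing to compare on the level of argument; you have simply gone further than the paper by sketching the original source's method (pole-order filtration via the toric Euler form $\Omega$, reduction modulo $J_0(f)$, and the passage to $J_1(f)$ via the colon ideal to account for spurious boundary poles), and that sketch is essentially correct.
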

\begin{proof}
    See p.332 of \cite{BatyrevCox1994}
\end{proof}
Now, since $H^1(X_{\Sigma},\mathbb{C})=0$, we have 
\[H^{p,1-p}(Y)=H^{p,1-p}(Y)_{van}\cong R_1(f)_{(2-p)\beta+K_{X_{\Sigma}}}\]
In particular, we have 
\[H^0(Y,\omega_Y)=H^{1,0}(Y)\cong R_1(f)_{\beta+K_{X_{\Sigma}}}\]
\[H^1(Y,\mathcal{O}_Y)=H^{0,1}(Y)\cong R_1(f)_{2\beta+K_{X_{\Sigma}}}\]
\begin{remark}
    In general, we have $H^0(Y,\omega_Y)\cong R(f)_{\beta+K_{X_{\Sigma}}}$ but for $H^1(Y,\mathcal{O}_Y)$,we have the exact sequence $0\rightarrow H^0(X_{\Sigma})\xrightarrow{\cup [Y]}H^2(X_{\Sigma})\rightarrow R(f)_{2\beta+K_{X_{\Sigma}}}\rightarrow H^1(Y,\mathcal{O}_Y)\rightarrow 0$
\end{remark}
\begin{remark}
    We have $H^0(Y,\omega_Y)=H^0(X_{\Sigma},\beta+K_{\Sigma})=S_{\beta+K_{\Sigma}}\Longrightarrow S_{\beta+K_{\Sigma}}=R_1(f)_{\beta+K_{\Sigma}}$. This can be seen from twisting the ideal sheaf sequence of $Y$ in $X$ by $\beta+K_{\Sigma}$ and $h^{2,0}(X_{\Sigma})=h^{2,1}(X_{\Sigma})=0$.
    \label{remark:genus}
\end{remark}
\begin{theorem}[Cox]
    For $f\in S_{\beta}$ nondegenerate and ample, there is a toric residue map $\operatorname{Res}\colon R_0(f)_{3\beta+K_{X_{\Sigma}}}\rightarrow \mathbb{C}$ such that the composition 
    \[R_1(f)_{3\beta+2K_{X_{\Sigma}}}\xrightarrow{\cdot \prod_{\rho\in \Sigma(1)}x_{\rho}}R_0(f)_{3\beta+K_{X_{\Sigma}}}\xrightarrow{\operatorname{Res}} \mathbb{C}\] is an isomorphism. We will also call this map $\operatorname{Res}$. 
\end{theorem}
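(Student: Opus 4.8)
The plan is to treat the two arrows separately and then combine them. First I would record the elementary fact that the left-hand arrow, multiplication by $\prod_{\rho\in\Sigma(1)}x_\rho$, is automatically well defined and injective: its degree is $\deg\bigl(\prod_\rho x_\rho\bigr)=\sum_\rho[D_\rho]=-K_{X_\Sigma}$, so it sends $S_{3\beta+2K_{X_\Sigma}}$ into $S_{3\beta+K_{X_\Sigma}}$, and a class $[g]\in R_1(f)_{3\beta+2K_{X_\Sigma}}$ maps to zero in $R_0(f)_{3\beta+K_{X_\Sigma}}$ if and only if $g\cdot\prod_\rho x_\rho\in J_0(f)$, which by the very definition $J_1(f)=J_0(f):\langle\prod_\rho x_\rho\rangle$ means $g\in J_1(f)$, i.e. $[g]=0$ already in $R_1(f)$. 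Thus the first map embeds $R_1(f)_{3\beta+2K_{X_\Sigma}}$ as a subspace of $R_0(f)_{3\beta+K_{X_\Sigma}}$, and the statement reduces to three points: (i) constructing $\operatorname{Res}$ on $R_0(f)_{3\beta+K_{X_\Sigma}}$; (ii) showing $\dim R_1(f)_{3\beta+2K_{X_\Sigma}}=1$; and (iii) showing the composition does not vanish.

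For (i) the construction is Cox's toric residue. With $n=\dim X_\Sigma=2$, the degree $3\beta+K_{X_\Sigma}=(n+1)\beta-\deg\bigl(\prod_\rho x_\rho\bigr)$ is exactly the critical degree attached to a system of three forms of degree $\beta$ on $X_\Sigma$. Since $f$ is nondegenerate, the theorem cited above guarantees that $\{x_\rho\,\partial f/\partial x_\rho\}_{\rho\in\Sigma(1)}$ have no common zero on $X_\Sigma$; choosing generic $\mathbb{C}$-linear combinations $H_0,H_1,H_2\in S_\beta$ of these forms, one still has $V(H_0,H_1,H_2)=\varnothing$, so the $H_i$ form a toric complete intersection and Cox's residue defines a functional on $S_{3\beta+K_{X_\Sigma}}$ (via the \v{C}ech/Grothendieck-residue description, or as a sum of local residues at the common zeros of a generic subsystem). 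The content of the construction is that this functional is independent of the auxiliary choices and vanishes on all of $J_0(f)_{3\beta+K_{X_\Sigma}}$, not merely on $\langle H_0,H_1,H_2\rangle$, so that it descends to the map $\operatorname{Res}\colon R_0(f)_{3\beta+K_{X_\Sigma}}\to\mathbb{C}$ asserted to exist.

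For (ii) and (iii) I would use the Gorenstein structure of $R_1(f)$ together with Cox's global residue theorem. Because $f$ is nondegenerate, $J_1(f)$ is a complete intersection, so $R_1(f)$ is Artinian Gorenstein; identifying its top (socle) degree — by a degree count on a regular sequence generating $J_1(f)$, or equivalently by applying Serre duality to the Hodge pieces $R_1(f)_{\beta+K_{X_\Sigma}}\cong H^0(Y,\omega_Y)$ and $R_1(f)_{2\beta+K_{X_\Sigma}}\cong H^1(Y,\mathcal{O}_Y)$ of Theorem~\ref{thm:BCox} — places it in degree $(\beta+K_{X_\Sigma})+(2\beta+K_{X_\Sigma})=3\beta+2K_{X_\Sigma}$, so $\dim R_1(f)_{3\beta+2K_{X_\Sigma}}=1$, which is (ii). Now, under the canonical identification $R_1(f)_0=\mathbb{C}$ (constants), the composition in question is precisely the residue pairing $R_1(f)_0\times R_1(f)_{3\beta+2K_{X_\Sigma}}\to\mathbb{C}$, $(a,s)\mapsto\operatorname{Res}\bigl(a\,s\cdot\prod_\rho x_\rho\bigr)$, evaluated at $a=1$. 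Cox's global residue theorem asserts that this pairing between complementary degrees is perfect; since both factors are one-dimensional, perfectness forces the composition to be an isomorphism, which gives (iii) and finishes the argument.

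The hard part will be step (iii): the perfectness (equivalently, the nonvanishing) of the toric residue in the critical degree. In contrast to the formal bookkeeping of (i) and the dimension count of (ii), this is the genuine geometric content of Cox's theory, equivalent to the statement that the trace map on the top local cohomology of the complete intersection is an isomorphism. A secondary technical point, which I would handle by invoking the $|\Sigma(1)|-2$ Euler relations of Theorem~\ref{thm:euler}, is reconciling the $|\Sigma(1)|$-generated ideal $J_0(f)$ with the three-form residue formalism: the relations show that a generic subsystem $H_0,H_1,H_2$ cuts out the same subscheme away from the irrelevant locus and that the residue annihilates every generator $x_\rho\,\partial f/\partial x_\rho$, so that $\operatorname{Res}$ is intrinsic to $R_0(f)$ and independent of the chosen combinations.
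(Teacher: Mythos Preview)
The paper does not prove this statement: it is quoted as a known result of Cox, and the entire proof in the paper is the single line ``See \cite{Cox95}.'' So there is no in-paper argument to compare your proposal against.

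That said, your sketch is a sound outline of how the result is actually established, and you correctly isolate the genuine content: the injectivity of the first arrow is a tautology from the definition of the ideal quotient $J_1(f)=J_0(f):\langle\prod_\rho x_\rho\rangle$, and the existence and nonvanishing of $\operatorname{Res}$ in the critical degree is Cox's theorem proper. One caution on your step~(ii): the argument via the Hodge identifications of Theorem~\ref{thm:BCox} is a bit loose, since knowing only that $\dim R_1(f)_{\beta+K_\Sigma}=\dim R_1(f)_{2\beta+K_\Sigma}=g$ does not by itself locate the socle of a Gorenstein ring (many complementary pairs of graded pieces have equal dimension). The robust route is the one you mention first, but note that it runs through $R_0(f)$ rather than $J_1(f)$ directly: the $|\Sigma(1)|$ generators $x_\rho\partial f/\partial x_\rho$ of degree $\beta$ form a regular sequence in $S$, so $R_0(f)$ has one-dimensional socle in degree $|\Sigma(1)|\beta+K_\Sigma$, and Cox identifies the one-dimensional image of $R_1(f)_{3\beta+2K_\Sigma}$ under multiplication by $\prod_\rho x_\rho$ inside $R_0(f)$ as the piece detected by the residue. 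Your closing remarks about reconciling the $|\Sigma(1)|$-generator ideal with a three-form residue via the Euler relations, and about independence of the auxiliary $H_i$, are exactly the technical points Cox handles; you are right that they are part of the construction and not mere formalities.
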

\begin{proof}
    See p. 85 of \cite{Cox95}
\end{proof}
\begin{theorem}[Villaflor Loyola]
For $f\in S_{\beta}$ nondegenerate and ample, in the assumption that $\prod_{\rho\in \Sigma(1)}x_{\rho}\not\in J_0(f)$, then for any $s\in \mathbb{Z}$ and $t\in \{-1,0,1,2\}$, we have the multiplication map 
\[R_1(f)_{s\beta+tK_{X_{\Sigma}}}\times R_1(f)_{(3-s)\beta+(2-t)K_{X_{\Sigma}}}\rightarrow R_1(f)_{3\beta+2K_{X_{\Sigma}}}\xrightarrow{\cong}\mathbb{C} \]
induces a injection 
\[R_1(f)_{s\beta+tK_{X_{\Sigma}}}\hookrightarrow (R_1(f)_{(3-s)\beta+(2-t)K_{X_{\Sigma}}})^{\vee}\]
In particular, we have perfect pairings for 
\[R_1(f)_{\beta+K_{X_{\Sigma}}}\times R_1(f)_{2\beta+K_{X_{\Sigma}}}\rightarrow R_1(f)_{3\beta+2K_{X_{\Sigma}}}\xrightarrow{\cong}\mathbb{C}\]
\[R_1(f)_{\beta}\times R_1(f)_{2\beta+2K_{X_{\Sigma}}}\rightarrow R_1(f)_{3\beta+2K_{X_{\Sigma}}}\xrightarrow{\cong}\mathbb{C}\]
\label{thm:loyola}
\end{theorem}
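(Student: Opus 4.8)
The plan is to deduce the duality from Gorenstein--Macaulay duality for the two Artinian rings $R_0(f)$ and $R_1(f)$, and to recognize the cohomologically meaningful graded pieces as carrying Serre duality on $Y$. First note that, by nondegeneracy, $J_0(f)$ and $J_1(f)$ are complete intersection ideals in $S$ (Definition~\ref{def:ideal}), so $R_0(f)=S/J_0(f)$ and $R_1(f)=S/J_1(f)$ are Artinian Gorenstein rings graded by $\operatorname{Pic}(X_\Sigma)$. Since each generator $x_\rho\,\partial f/\partial x_\rho$ of $J_0(f)$ has degree $\beta$ and $\sum_{\rho}[D_\rho]=-K_{X_\Sigma}$, the socle of $R_0(f)$ sits in degree $n\beta+K_{X_\Sigma}$; and exploiting the linkage $J_1(f)=J_0(f):\langle\prod_{\rho}x_{\rho}\rangle$ --- which makes $J_1(f)$ directly linked to $J_0(f)+\langle\prod_{\rho}x_{\rho}\rangle$ through the complete intersection $J_0(f)$ --- one computes that the socle of $R_1(f)$ sits in degree $n\beta+2K_{X_\Sigma}$. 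Thus the multiplication pairing of $R_1(f)$ into $R_1(f)_{n\beta+2K_{X_\Sigma}}\cong\mathbb{C}$ is perfect in \emph{all} complementary degrees. The subtlety is that the pairing in the statement lands instead in $R_1(f)_{3\beta+2K_{X_\Sigma}}$, which is $\cong\mathbb{C}$ only because $\prod_{\rho}x_{\rho}\notin J_0(f)$ and Cox's toric residue theorem applies; and since $3\beta+2K_{X_\Sigma}=(n\beta+2K_{X_\Sigma})-(n-3)\beta$ with $(n-3)\beta$ effective, this line coincides with the Gorenstein socle exactly when $n=3$ (i.e.\ $X_\Sigma=\mathbb{P}^2$) and otherwise lies strictly below it. That is precisely why one should only expect an injection in general.

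For the distinguished perfect pairings I would bypass the Gorenstein machinery. By Theorem~\ref{thm:BCox} one has $R_1(f)_{\beta+K_{X_\Sigma}}\cong H^{1,0}(Y)$ and $R_1(f)_{2\beta+K_{X_\Sigma}}\cong H^{0,1}(Y)$, and since the Batyrev--Cox isomorphisms intertwine multiplication in $R_1(f)$ with cup product and $\operatorname{Res}$ with the trace on $Y$, the pairing $R_1(f)_{\beta+K_{X_\Sigma}}\times R_1(f)_{2\beta+K_{X_\Sigma}}\to\mathbb{C}$ is identified with Poincar\'e duality $H^{1,0}(Y)\times H^{0,1}(Y)\to\mathbb{C}$, which is perfect. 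For $R_1(f)_{\beta}\times R_1(f)_{2\beta+2K_{X_\Sigma}}$ --- and, more generally, for any $(s,t)$ whose complementary $K$-degree $2-t$ again lies in $\{0,1,2\}$ --- I would apply the injection statement to both $(s,t)$ and $(3-s,2-t)$: this forces $\dim R_1(f)_{s\beta+tK_{X_\Sigma}}=\dim R_1(f)_{(3-s)\beta+(2-t)K_{X_\Sigma}}$ and upgrades the two injections to a perfect pairing.

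The injection itself I would argue directly. Fix a nonzero $a\in R_1(f)_{s\beta+tK_{X_\Sigma}}$. By Gorenstein duality for $R_1(f)$ (socle degree $n\beta+2K_{X_\Sigma}$) there is $c\in R_1(f)_{(n-s)\beta+(2-t)K_{X_\Sigma}}$ with $ac\neq 0$ in the socle. Since $(n-s)\beta+(2-t)K_{X_\Sigma}=\bigl((3-s)\beta+(2-t)K_{X_\Sigma}\bigr)+(n-3)\beta$, it suffices to know that $c$ lies in the image of the multiplication map
\[R_1(f)_{(3-s)\beta+(2-t)K_{X_\Sigma}}\otimes R_1(f)_{(n-3)\beta}\longrightarrow R_1(f)_{(n-s)\beta+(2-t)K_{X_\Sigma}};\]
writing $c=\sum_i b_id_i$, from $0\neq ac=\sum_i(ab_i)d_i$ we get $ab_i\neq 0$ for some $i$, so $a$ pairs nontrivially with $b_i$, giving $R_1(f)_{s\beta+tK_{X_\Sigma}}\hookrightarrow\bigl(R_1(f)_{(3-s)\beta+(2-t)K_{X_\Sigma}}\bigr)^{\vee}$. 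An equivalent packaging, closer to the approach of \cite{FavalePirola2022} and \cite{TorelliGonzalezAlonso2024}, is to transport everything into $R_0(f)$: the defining identity $(J_1(f))_\gamma=\{g\in S_\gamma:g\prod_{\rho}x_{\rho}\in J_0(f)\}$ gives an embedding $R_1(f)_\gamma\hookrightarrow R_0(f)_{\gamma-K_{X_\Sigma}}$ by multiplication with $\prod_{\rho}x_{\rho}$, under which the pairing becomes $(a,b)\mapsto\operatorname{Res}\bigl(\overline{(\prod_{\rho}x_{\rho})a}\cdot\tilde b\bigr)$ for any lift $\tilde b\in R_0(f)$ of $b$, and injectivity reduces to nondegeneracy of the residue pairing on $R_0(f)$ on the relevant graded pieces.

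The main obstacle is exactly the surjectivity invoked above --- equivalently, the nondegeneracy of the residue pairing on $R_0(f)$ away from its socle --- for the non-cohomological degrees, most acutely $t=-1$, whose complementary degree has $2-t=3$ outside $\{-1,0,1,2\}$ so that there is no symmetric partner to play off against. Here abstract Gorenstein duality is genuinely not enough: controlling the graded pieces of $J_1(f)$ near degree $n\beta+2K_{X_\Sigma}$ --- in effect a Castelnuovo--Mumford-type regularity statement for $R_1(f)$ --- requires the explicit shape of the toric residue together with the complete intersection structure of $J_0(f)$, and it is exactly at this step that the hypothesis $\prod_{\rho}x_{\rho}\notin J_0(f)$ is essential. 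This is the portion worked out in \cite{Loyola2024} and \cite{BatyrevCox1994}.
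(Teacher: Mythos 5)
The paper itself does not prove this statement --- it is quoted from \cite{Loyola2024} and the ``proof'' is the citation --- so there is no internal argument to measure yours against; I can only judge the proposal on its own terms. The structural picture you set up is correct: for nondegenerate $f$ the ideal $J_0(f)$ is an Artinian complete intersection, so $R_0(f)$ is graded Gorenstein with socle in degree $n\beta+K_{X_\Sigma}$ (with $n=|\Sigma(1)|$), and the identification $R_1(f)=R_0(f)/\operatorname{Ann}\bigl(\prod_\rho x_\rho\bigr)\cong\bigl(\prod_\rho x_\rho\bigr)R_0(f)$ does show that $R_1(f)$ is Gorenstein with socle in degree $n\beta+2K_{X_\Sigma}$ and perfect multiplication pairings in the degrees complementary to \emph{that}. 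You are also right about the essential subtlety: the pairing in the statement lands in $R_1(f)_{3\beta+2K_{X_\Sigma}}$, which for $n>3$ is a one-dimensional piece strictly below the socle, so Macaulay duality does not apply directly. The formal step upgrading the two opposite injections (when both $t$ and $2-t$ lie in the allowed range) to a perfect pairing is fine.

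The gap is the one you flag yourself, and it is not a technicality but the entire content of the theorem. To get the injection you need that for $0\neq a\in R_1(f)_{s\beta+tK_{X_\Sigma}}$ the map $\cdot a\colon R_1(f)_{(3-s)\beta+(2-t)K_{X_\Sigma}}\to R_1(f)_{3\beta+2K_{X_\Sigma}}$ is nonzero, and your route to this requires the surjectivity of $R_1(f)_{(3-s)\beta+(2-t)K_{X_\Sigma}}\otimes R_1(f)_{(n-3)\beta}\to R_1(f)_{(n-s)\beta+(2-t)K_{X_\Sigma}}$ (or at least that $a$ does not annihilate the image). No argument is given for this, and deferring it to \cite{Loyola2024} and \cite{BatyrevCox1994} amounts to citing the result being proved; note also that this is exactly where the restriction $t\in\{-1,0,1,2\}$ must enter, since the surjectivity certainly fails for arbitrary degrees. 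Similarly, the identification of the $(\beta+K_{X_\Sigma},\,2\beta+K_{X_\Sigma})$ pairing with Serre duality on $Y$ needs the compatibility of the Batyrev--Cox isomorphism with cup product and of $\operatorname{Res}$ with the trace on $Y$ --- true, but itself a substantive theorem (Carlson--Griffiths in the toric setting) that you assert rather than establish. So the proposal is a correct and well-organized reduction of the theorem to a sharper multiplication statement, not a proof of it.
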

\begin{proof}
    See Corollary 3.1 of \cite{Loyola2024}
\end{proof}
\begin{remark}
    This assumption $\prod_{\rho\in \Sigma(1)}x_{\rho}\not\in J_0(f)$ is automatically satisfied for curve with genus at least 1 on complete smooth toric surfaces. See Remark 3.2 of \cite{Loyola2024}.
    \label{remark:asum}
\end{remark}
Now, we present a lemma that is analogous to a lemma proved in \cite{FavalePirola2022} using the same method. Villaflor Loyola established a more general one in \cite{Loyola2024} but we include this one for notational purpose.\\
For $\alpha\neq 0\in S_{\beta+K}=R_1(f)_{\beta+K_{\Sigma}}$, denote $K_{\gamma}(\alpha)$ to be the kernel of the map $R_1(f)_{\gamma}\xrightarrow{\cdot \alpha}R_1(f)_{\gamma+\beta+{K_{\Sigma}}}$.
\begin{lemma}
    $R_1(f)_{2\beta+K_{\Sigma}}/K_{2\beta+K_{\Sigma}}(\alpha)\cong \mathbb{C}$ and the pairing 
    \[R_1(f)_{\beta}/K_{\beta}(\alpha)\times R_1(f)_{\beta+K_{\Sigma}}/K_{\beta+K_{\Sigma}}(\alpha)\rightarrow R_1(f)_{2\beta+K_{\Sigma}}/K_{2\beta+K_{\Sigma}}(\alpha)\cong \mathbb{C}\]
    is perfect inducing 
    \[\dim R_1(f)_{\beta}/K_{\beta}(\alpha)=\dim R_1(f)_{\beta+K_{\Sigma}}/K_{\beta+K_{\Sigma}}(\alpha)\]
    \label{lemma:duality}
\end{lemma}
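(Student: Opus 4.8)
The plan is to extract everything from the two explicit perfect pairings recorded in Theorem~\ref{thm:loyola}, namely
\[
R_1(f)_{\beta+K_{\Sigma}}\times R_1(f)_{2\beta+K_{\Sigma}}\to R_1(f)_{3\beta+2K_{\Sigma}}\xrightarrow{\ \cong\ }\mathbb{C}
\]
and
\[
R_1(f)_{\beta}\times R_1(f)_{2\beta+2K_{\Sigma}}\to R_1(f)_{3\beta+2K_{\Sigma}}\xrightarrow{\ \cong\ }\mathbb{C},
\]
together with associativity of multiplication in $R_1(f)$; recall also that $\alpha$ is genuinely a nonzero element of $R_1(f)_{\beta+K_{\Sigma}}$ by Remark~\ref{remark:genus}. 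For the first assertion, note that $K_{2\beta+K_{\Sigma}}(\alpha)$ is by definition the kernel of multiplication $\cdot\alpha\colon R_1(f)_{2\beta+K_{\Sigma}}\to R_1(f)_{3\beta+2K_{\Sigma}}\cong\mathbb{C}$, so the quotient $R_1(f)_{2\beta+K_{\Sigma}}/K_{2\beta+K_{\Sigma}}(\alpha)$ injects into $\mathbb{C}$ and hence equals $0$ or $\mathbb{C}$; it is nonzero because nondegeneracy of the first pairing above produces some $\eta\in R_1(f)_{2\beta+K_{\Sigma}}$ with $\alpha\eta\neq 0$.

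Next I would introduce the bilinear map $R_1(f)_{\beta}\times R_1(f)_{\beta+K_{\Sigma}}\to R_1(f)_{2\beta+K_{\Sigma}}$, $(a,b)\mapsto ab$, followed by the projection onto the rank-one quotient $R_1(f)_{2\beta+K_{\Sigma}}/K_{2\beta+K_{\Sigma}}(\alpha)\cong\mathbb{C}$, and check that it descends to the quotients $R_1(f)_{\beta}/K_{\beta}(\alpha)$ and $R_1(f)_{\beta+K_{\Sigma}}/K_{\beta+K_{\Sigma}}(\alpha)$: if $a\in K_{\beta}(\alpha)$ then $a\alpha=0$ in $R_1(f)_{2\beta+K_{\Sigma}}$, whence $(ab)\alpha=(a\alpha)b=0$ and $ab\in K_{2\beta+K_{\Sigma}}(\alpha)$; and if $b\in K_{\beta+K_{\Sigma}}(\alpha)$ then $b\alpha=0$ in $R_1(f)_{2\beta+2K_{\Sigma}}$, whence $(ab)\alpha=a(b\alpha)=0$, again $ab\in K_{2\beta+K_{\Sigma}}(\alpha)$. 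This yields a well-defined pairing on the quotients into $\mathbb{C}$.

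To see this pairing is perfect I would compute its two kernels separately. If the class of $a\in R_1(f)_{\beta}$ pairs to zero with everything, then $(a\alpha)b=0$ in $R_1(f)_{3\beta+2K_{\Sigma}}$ for every $b\in R_1(f)_{\beta+K_{\Sigma}}$ (the relation holds for all $b$, not only coset representatives, because $b\in K_{\beta+K_{\Sigma}}(\alpha)$ already forces $b\alpha=0$); nondegeneracy of the first pairing then forces $a\alpha=0$ in $R_1(f)_{2\beta+K_{\Sigma}}$, i.e. $a\in K_{\beta}(\alpha)$, so the left kernel is trivial. Symmetrically, if the class of $b\in R_1(f)_{\beta+K_{\Sigma}}$ pairs to zero with everything, then $(b\alpha)a=0$ in $R_1(f)_{3\beta+2K_{\Sigma}}$ for all $a\in R_1(f)_{\beta}$, and now nondegeneracy of the \emph{second} pairing gives $b\alpha=0$ in $R_1(f)_{2\beta+2K_{\Sigma}}$, i.e. $b\in K_{\beta+K_{\Sigma}}(\alpha)$, so the right kernel is trivial. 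A nondegenerate pairing of finite-dimensional vector spaces is perfect and forces the two factors to have equal dimension, which is the last displayed assertion.

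The argument is pure linear algebra once Theorem~\ref{thm:loyola} is in hand, so I do not expect a genuine obstacle; the only thing demanding attention is the degree bookkeeping---in particular, making sure the two genuinely distinct perfect pairings, the one receiving $R_1(f)_{2\beta+K_{\Sigma}}$ and the one receiving $R_1(f)_{2\beta+2K_{\Sigma}}$, are invoked on the correct side when killing the left and right kernels---and the minor point that the defining conditions for the kernels may be tested against arbitrary elements rather than only coset representatives.
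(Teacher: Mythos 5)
Your proposal is correct and follows essentially the same route as the paper: both arguments reduce everything to the two perfect pairings of Theorem~\ref{thm:loyola} plus associativity, using the pairing with $R_1(f)_{2\beta+K_{\Sigma}}$ to kill the kernel on the $R_1(f)_{\beta}$ side and the pairing with $R_1(f)_{2\beta+2K_{\Sigma}}$ to kill the kernel on the $R_1(f)_{\beta+K_{\Sigma}}$ side. Your contrapositive phrasing and the explicit well-definedness check are just more careful presentations of the same argument.
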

\begin{proof}
Since $\alpha\neq 0$ and by Theorem~\ref{thm:loyola}  we have a perfect pairing 
\[R_1(f)_{\beta+K_{X_{\Sigma}}}\times R_1(f)_{2\beta+K_{X_{\Sigma}}}\rightarrow R_1(f)_{3\beta+2K_{X_{\Sigma}}}\xrightarrow{\cong}\mathbb{C},\]
we know that
\[R_1(f)_{2\beta+K_{\Sigma}}\xrightarrow{\cdot \alpha}R_1(f)_{3\beta+2K_{\Sigma}}\] is surjective. Hence, we have an isomorphism 
\[R_1(f)_{2\beta+K_{\Sigma}}/K_{2\beta+K_{\Sigma}}(\alpha)\cong \mathbb{C}\]
Now, we will show that the pairing
\[R_1(f)_{\beta}/K_{\beta}(\alpha)\times R_1(f)_{\beta+K_{\Sigma}}/K_{\beta+K_{\Sigma}}(\alpha)\rightarrow R_1(f)_{2\beta+K_{\Sigma}}/K_{2\beta+K_{\Sigma}}(\alpha)\cong \mathbb{C}\]
is perfect.\\
Take $[x]\neq 0\in R_1(f)_{\beta}/K_{\beta}(\alpha)$, we have $x\alpha\neq 0\in R_1(f)_{2\beta+K_{\Sigma}}$. By the perfect pairing above, we have $y\in R_1(f)_{\beta+K_{\Sigma}}$ such that $x\alpha y\neq 0\in R_1(f)_{3\beta+2K_{\Sigma}}$. Therefore, we have $[x]\cdot [y]=[xy]\neq 0\in R_1(f)_{2\beta+K_{\Sigma}}/K_{2\beta+K_{\Sigma}}(\alpha)\cong \mathbb{C}$ for some $[y]\neq 0\in R_1(f)_{\beta+K_{\Sigma}}/K_{\beta+K_{\Sigma}}(\alpha)$. Now, take $[x']\neq 0\in R_1(f)_{\beta+K_{\Sigma}}/K_{\beta+K_{\Sigma}}(\alpha)$, a similar argument shows that we may find $[y']\neq 0\in R_1(f)_{\beta}/K_{\beta}(\alpha)$ such that $[x']\cdot [y']\neq 0\in R_1(f)_{2\beta+K_{\Sigma}}/K_{2\beta+K_{\Sigma}}(\alpha)$. Therefore, we have this pairing is perfect in particular, we have 
\[\dim R_1(f)_{\beta}/K_{\beta}(\alpha)=\dim R_1(f)_{\beta+K_{\Sigma}}/K_{\beta+K_{\Sigma}}(\alpha)\]
\end{proof}
\subsection{IVHS of Ample Sections in Toric Surface} We mainly refer to Chapter 6 of \cite{Voisin2003} and \cite{BatyrevCox1994}
\begin{theorem}[Demazure-Bott-Steenbrink-Danilov Vanishing]
For a complete simplicial toric variety $X$, let $\mathcal{L}$ be an ample invertible sheaf. Then, for $p\ge 0$ and $i>0$, we have 
\[H^i(X,\Omega^p_X\otimes \mathcal{L})=0\]
\label{thm:demazure}
\end{theorem}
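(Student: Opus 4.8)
This is the Bott--Steenbrink--Danilov vanishing theorem for toric varieties, and I would reconstruct its classical proof as follows. The plan is to resolve $\Omega^p_X$ by sheaves that are cohomologically trivial after twisting by $\mathcal{L}$, and then to reduce what remains to a combinatorial contractibility statement using the torus action. Since $\mathcal{L}$ is ample I may assume $X=X_\Sigma$ is smooth and complete, hence projective, of dimension $n$ (the case $p>n$ being vacuous). First I would dispose of the special case $p=0$: the assertion that $H^i(Y,\mathcal{M})=0$ for $i>0$ whenever $Y$ is a complete toric variety and $\mathcal{M}$ an ample line bundle. This is the classical Demazure--Batyrev vanishing, proved by expressing the cohomology through the $M$-graded \v{C}ech complex of the affine toric cover $\{U_\sigma\}_{\sigma\in\Sigma(n)}$ and checking that, in each character weight $m\in M$, the resulting complex computes the reduced cohomology of a convex --- hence contractible --- region attached to $m$. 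I will use this input both on $X$ and on all of its torus-invariant subvarieties.

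For general $p$ I would pass to a residue resolution. Let $D=\sum_{\rho\in\Sigma(1)}D_\rho$ be the toric boundary divisor; it is reduced and simple normal crossing because $X$ is smooth, and for each cone $\sigma\in\Sigma$ the orbit closure $D_\sigma=\overline{O(\sigma)}$ --- the transverse intersection of the $D_\rho$ over the rays of $\sigma$, with $D_\sigma=X$ for $\sigma$ the zero cone --- is again a smooth complete toric variety, of dimension $n-\dim\sigma$ and with its own toric boundary $\partial D_\sigma$. Iterated Poincar\'{e} residues give an exact complex of sheaves on $X$
\[
0\to\Omega^p_X\to\Omega^p_X(\log D)\to\bigoplus_{\dim\sigma=1}(\iota_\sigma)_*\Omega^{p-1}_{D_\sigma}(\log\partial D_\sigma)\to\cdots\to\bigoplus_{\dim\sigma=p}(\iota_\sigma)_*\mathcal{O}_{D_\sigma}\to 0,
\]
so $\Omega^p_X$ is quasi-isomorphic to the subcomplex $C^\bullet$ of terms in nonnegative cohomological degree. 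The decisive toric fact is that logarithmic forms along the toric boundary are \emph{trivial}: $\Omega^q_{D_\sigma}(\log\partial D_\sigma)\cong\mathcal{O}_{D_\sigma}\otimes_{\mathbb{Z}}\bigwedge^q M(\sigma)$, with $M(\sigma)$ the character lattice of $D_\sigma$. (Taking exterior powers of the generalized Euler sequence $0\to\Omega^1_X\to\bigoplus_{\rho\in\Sigma(1)}\mathcal{O}_X(-D_\rho)\to\operatorname{Cl}(X)\otimes_{\mathbb{Z}}\mathcal{O}_X\to 0$ instead would not suffice, since the line bundles $\mathcal{O}_X(\mathcal{L}-D_{\rho_1}-\cdots-D_{\rho_j})$ that then appear need not be nef.)

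Next I would twist by $\mathcal{L}$ and run the hypercohomology spectral sequence. By the triviality above, each term of $C^\bullet\otimes\mathcal{L}$ is a finite direct sum of pushforwards of copies of $\mathcal{L}|_{D_\sigma}$; as $\mathcal{L}|_{D_\sigma}$ is ample on the complete toric variety $D_\sigma$, the $p=0$ input gives $H^b(X,C^a\otimes\mathcal{L})=0$ for all $b>0$. Hence the spectral sequence $E_1^{a,b}=H^b(X,C^a\otimes\mathcal{L})\Rightarrow H^{a+b}(X,\Omega^p_X\otimes\mathcal{L})$ degenerates at $E_2$, and $H^k(X,\Omega^p_X\otimes\mathcal{L})$ becomes the $k$-th cohomology of the complex of global sections
\[
H^0(X,C^0\otimes\mathcal{L})\to H^0(X,C^1\otimes\mathcal{L})\to\cdots\to H^0(X,C^p\otimes\mathcal{L}).
\]
It then remains only to show that this complex is exact in cohomological degrees $1,\dots,p$.

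This final step is where I expect the real work to lie. The whole construction is $T_N$-equivariant, so the complex of global sections splits as a direct sum over characters $m\in M$ of finite-dimensional complexes of $\mathbb{C}$-vector spaces, and it suffices to prove that each weight-$m$ summand is acyclic in positive degrees. Unwinding the polytopal description of the spaces $H^0(X,C^a\otimes\mathcal{L})$, each such summand computes, up to a degree shift and constant coefficients, the reduced cohomology of a region $Z_m$ determined by $m$, which convexity forces to be empty or contractible, hence acyclic. Granting this, $H^k(X,\Omega^p_X\otimes\mathcal{L})=0$ for every $k>0$, which is the assertion. The main obstacle is thus isolated in this convexity/contractibility computation --- the natural refinement of the $p=0$ argument, carried out with the extra exterior-algebra bookkeeping supplied by the logarithmic forms --- while everything else is formal homological algebra.
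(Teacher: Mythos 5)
The paper offers no argument for this theorem --- its ``proof'' is the citation to Batyrev--Cox, where the statement appears as the Bott--Steenbrink--Danilov vanishing theorem --- so there is no in-paper proof to compare against, and your reconstruction has to be judged on its own terms. What you describe is essentially the classical Danilov/Ishida-complex proof, and its architecture is sound: the residue resolution you write down is exactly the Ishida complex of $\Omega^p_X$ (exact because $X$ is smooth), its terms trivialize as $\mathcal{O}_{D_\sigma}\otimes_{\mathbb{Z}}\bigwedge^{q}M(\sigma)$, the restriction $\mathcal{L}|_{D_\sigma}$ remains ample on each orbit closure, and the $p=0$ case (Demazure vanishing, which does require completeness --- a hypothesis implicit in the statement and in how the paper uses it, but not actually a consequence of ampleness as you suggest) kills all higher cohomology of the twisted terms, so the hypercohomology spectral sequence reduces everything to the complex of global sections. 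Your parenthetical warning that exterior powers of the generalized Euler sequence would not suffice is also well taken.

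The one step you assert rather than prove is the acyclicity of the weight-$m$ summands, and your description of it is slightly off. The weight-$m$ piece is not the constant-coefficient reduced cohomology of a convex region $Z_m$: if $\sigma(m)$ denotes the cone of $\Sigma$ corresponding to the minimal face of the polytope of $\mathcal{L}$ containing $m$, then the term indexed by $\sigma$ contributes exactly when $\sigma\preceq\sigma(m)$, and the summand is the Koszul-type complex $\bigwedge^{p}M\to\bigoplus_{\rho\preceq\sigma(m),\,\dim\rho=1}\bigwedge^{p-1}M(\rho)\to\cdots\to\bigoplus_{\tau\preceq\sigma(m),\,\dim\tau=p}\bigwedge^{0}M(\tau)$, whose coefficient lattices shrink from term to term. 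Its exactness in positive degrees is still elementary: choosing a basis of $N$ extending the generators of the smooth cone $\sigma(m)$ splits $\bigwedge^{\bullet}M$ into pieces on which the complex becomes the augmented cochain complex of a simplex on a subset of the rays of $\sigma(m)$, which is acyclic. So the contractibility you invoke is that of simplices carrying exterior-algebra bookkeeping, not of a region attached to $m$; and note that this is the second place (besides the exactness of the Ishida complex) where smoothness of $X$ is genuinely used. With that lemma supplied, your argument is complete.
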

\begin{proof}
    See Theorem 7.1 of \cite{BatyrevCox1994}
\end{proof}
In particular, this satisfies the condition on p.160 in section 6.1.2 of \cite{Voisin2003}. Notice that $X$ doesn't need to be smooth.
\begin{theorem}[Griffiths 1969]
    Let $X$ be a complete simplicial toric variety of dimension $n$ and $Y$ be a generic smooth section of $\mathcal{L}$ as above. Denote $U=X\backslash Y$. For every integer $p$ between $1$ and $n$, the image of the natural map 
    \[H^0(X,K_X(pY))\rightarrow H^n(U,\mathbb{C})\]
    which to a section $\alpha$ (viewed as a meromorphic form on $X$ of degree $n$, and therefore closed, holomorphic on $U$ and having a pole of order $p$ along $Y$) associates its de Rham cohomology class, is equal to $F^{n-p+1}H^n(U)$. Via Griffiths residue, $F^{n-p+1}H^n(U)=F^{n-p}H^{n-1}(Y,\mathbb{C})_{van}$. 
    \label{thm:Griffiths1969}
\end{theorem}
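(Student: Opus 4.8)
The plan is to deduce the statement from the general machinery of section~6.1.2 of \cite{Voisin2003}, whose only hypothesis is a vanishing of the type $H^i(X,\Omega^p_X\otimes\mathcal L)=0$ for $i>0$ and $\mathcal L$ ample; on a smooth complete toric $X$ this is exactly Demazure vanishing, Theorem~\ref{thm:demazure}. So the work reduces to recording that the hypothesis holds and recalling why the conclusion follows; I sketch the latter. Note first that $U=X\setminus Y$ is affine, being the complement of an ample divisor, so only $H^n(U)$ is in play.

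First I would invoke Grothendieck's algebraic de Rham comparison together with Deligne's mixed Hodge theory: since $X$ is smooth projective and $Y$ smooth, $H^n(U,\mathbb C)=\mathbb H^n(X,\Omega^\bullet_X(*Y))$, the inclusion $\Omega^\bullet_X(\log Y)\hookrightarrow\Omega^\bullet_X(*Y)$ is a quasi-isomorphism, and the Hodge filtration is the one induced by the subcomplexes $\Omega^{\ge q}_X(\log Y)$, the resulting spectral sequence degenerating at $E_1$. Next comes the toric input: the hypercohomology spectral sequence $E_1^{a,b}=H^b(X,\Omega^a_X(*Y))\Rightarrow\mathbb H^{a+b}$ collapses onto the row $b=0$. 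Indeed $\Omega^\bullet_X(*Y)=\varinjlim_k\Omega^\bullet_X(kY)$, and for $k\ge 1$ the sheaf $\Omega^a_X(kY)=\Omega^a_X\otimes\mathcal L^{\otimes k}$ is $\Omega^a_X$ twisted by an ample bundle, so Theorem~\ref{thm:demazure} gives $H^b(X,\Omega^a_X(kY))=0$ for $b>0$; since cohomology commutes with the filtered colimit on the Noetherian space $X$, $H^b(X,\Omega^a_X(*Y))=0$ for $b>0$. Hence
\[ H^n(U,\mathbb C)\;=\;H^n\!\big(H^0(X,\mathcal O_X(*Y))\to H^0(X,\Omega^1_X(*Y))\to\cdots\to H^0(X,\Omega^n_X(*Y))\big), \]
so every class is represented by a global closed meromorphic $n$-form holomorphic on $U$, and this pole-order presentation is faithful.

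Then I would run the Griffiths pole-order comparison to pin down the Hodge filtration. Filtering $\Omega^\bullet_X(*Y)$ by order of pole and using the Poincar\'e-residue short exact sequences relating $\Omega^a_X(kY)$, $\Omega^a_X((k-1)Y)$ and a twist of $\Omega^{a-1}_Y$, one identifies the graded pieces with twisted de Rham complexes of $Y$; applying the vanishing above to the subsheaves $\Omega^a_X((k-1)Y)$ forces the induced pole-order filtration on $H^n(U)$ to coincide, over the whole range, with the Hodge filtration under the shift $F^{n-p+1}\leftrightarrow\{\text{pole order}\le p\}$. For $p=1$ this is an equality since $K_X(Y)=\Omega^n_X(Y)=\Omega^n_X(\log Y)$ and the $E_1$-degeneration gives $H^0(X,K_X(Y))\xrightarrow{\sim}F^nH^n(U)$; inductively a section of $K_X(pY)$, viewed as a closed meromorphic $n$-form with pole of order $p$ along $Y$, maps into $F^{n-p+1}H^n(U)$ and, by a dimension count built from the same vanishing, onto it. Finally Griffiths' residue along $Y$ identifies $F^{n-p+1}H^n(U)$ with $F^{n-p}H^{n-1}(Y,\mathbb C)_{\mathrm{van}}$; since $H^{\mathrm{odd}}(X,\mathbb C)=0$ the vanishing cohomology is all of $H^{n-1}(Y)$, and passing to $\mathrm{Gr}_F$ recovers the claimed $H^{n-1,n-p}(Y)_{\mathrm{van}}$.

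The hard part will be this last comparison of the pole-order filtration with the Hodge filtration over the full range $1\le p\le n$: over $\mathbb P^n$ it is classical, but in general the pole-order filtration is known to compute the Hodge filtration only in a range governed by the positivity of $\mathcal L$ and the cohomology of $\Omega^\bullet_Y$, and what makes it work here is precisely the strength of Demazure vanishing on $X$. The delicate bookkeeping --- the indices in the residue sequences, and the check that no spurious classes survive in the associated graded --- is exactly what is carried out in section~6.1.2 of \cite{Voisin2003}, so with Theorem~\ref{thm:demazure} supplying the hypothesis I would cite that reference rather than reprove it.
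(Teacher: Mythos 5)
Your proposal is correct and matches the paper's approach exactly: the paper also verifies the hypothesis of section~6.1.2 of \cite{Voisin2003} via Demazure vanishing (Theorem~\ref{thm:demazure}) and then simply cites that reference for the proof. Your additional sketch of the pole-order/Hodge-filtration comparison is a faithful outline of what Voisin carries out, so there is no substantive difference.
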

\begin{proof}
    See p. 160 of \cite{Voisin2003}
\end{proof}
Let $B\subset |\beta|=\mathbb{P}(S_{\beta})$ be the open subset parametrizing smooth nondegenerate curves in the fixed linear system $\beta$ on the smooth toric surface $X_{\Sigma}$ where $\beta$ is ample. We have a universal smooth curve $\pi:\mathcal{Y}\rightarrow B.$ For $y\in B$, denote $Y=\pi^{-1}(y)$.\\
By Theorem~\ref{thm:Griffiths1969}, we have two maps
\[\gamma_1\colon H^0(X_{\Sigma},K_{X_{\Sigma}}(Y))\rightarrow H^{1,0}(Y)_{van}\cong H^{1,0}(Y) \]
\[\gamma_2\colon H^0(X_{\Sigma},K_{X_{\Sigma}}(2Y))\rightarrow H^{0,1}(Y)_{van}\cong H^{0,1}(Y) \]
The infinitesimal variation of Hodge structure of $Y$ in $X_{\Sigma}$ is the map 
\[\overline{\nabla}:H^{1,0}(Y)\rightarrow \operatorname{Hom}(T_{B,y},H^{0,1}(Y))\]
where the map is induced by the Gauss-Manin connection.
\begin{theorem}[Carlson\&Griffiths (1980)]
The map \[\overline{\nabla}:H^{1,0}(Y)\rightarrow \operatorname{Hom}(T_{B,y},H^{0,1}(Y))\]
can be described as follows. For $P\in H^0(X_{\Sigma},K_{X_{\Sigma}}(Y))$ and $H\in T_{B,y}$, we have 
\[\overline{\nabla}(\gamma_1(P))(H)=c\gamma_2(PH)\]
where $PH$ is the image of $H^0(X_{\Sigma},K_{X_{\Sigma}}(Y))\times H^0(X_{\Sigma},O(Y)) \rightarrow H^0(X_{\Sigma},K_{X_{\Sigma}}(2Y))$ and $c$ is some constant.
\label{thm:CarlsonGriffiths1980}
\end{theorem}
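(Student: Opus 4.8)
The plan is to reduce the statement to the algebraic de Rham description of the cohomology of the open complement $U=X_{\Sigma}\setminus Y$, to use the fact that on a family of such complements the Gauss--Manin connection is computed by naive differentiation of rational--form representatives, to carry out a one--line pole--order computation, and then to push everything down to $H^{1}(Y)_{van}$ via the Griffiths residue. By Demazure vanishing (Theorem~\ref{thm:demazure}) $X_{\Sigma}$ satisfies the hypotheses of Section~6.1.2 of \cite{Voisin2003}, so $H^{2}(U,\mathbb{C})$ is computed by global rational $2$--forms on $X_{\Sigma}$ holomorphic on $U$, the Hodge filtration $F^{\bullet}H^{2}(U)$ is the pole--order filtration (Theorem~\ref{thm:Griffiths1969}), and composing with the residue recovers the Batyrev--Cox isomorphisms of Theorem~\ref{thm:BCox}. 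Concretely, for $P\in H^{0}(X_{\Sigma},K_{X_{\Sigma}}(pY))=S_{p\beta+K_{X_{\Sigma}}}$ the class $\gamma_{p}(P)$ is represented by $\dfrac{P\,\Omega_{\Sigma}}{f^{\,p}}$, where $\Omega_{\Sigma}$ is the toric Euler form (Cox's canonical $2$--form, a generator of $H^{0}(X_{\Sigma},\Omega^{2}_{X_{\Sigma}}(\sum_{\rho}D_{\rho}))$).

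First I would fix a local frame for the family: choose a lift $h\in S_{\beta}\cong H^{0}(X_{\Sigma},\mathcal{O}(Y))$ of the tangent vector $H\in T_{B,y}$ and set $f_{t}=f+th$, so that $Y_{t}=\{f_{t}=0\}$ traces an arc in $B$ through $y$ with derivative $H$ (for $|t|$ small $Y_{t}$ is again nondegenerate). The Gauss--Manin covariant derivative $\nabla_{\partial_{t}}$ on $\bigcup_{t}H^{2}(U_{t},\mathbb{C})$ sends the class of a rational form $\omega_{t}$ to the class of $\partial_{t}\omega_{t}$, taken modulo $d$ of rational forms holomorphic on $U$; this is the standard Griffiths/Katz--Oda description, valid here by the de Rham comparison just recalled. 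Since $\Omega_{\Sigma}$ is $t$--independent, differentiating the representative of $\gamma_{1}(P)$ gives
\[
\nabla_{\partial_{t}}\big|_{t=0}\,\frac{P\,\Omega_{\Sigma}}{f_{t}}
= -\,\frac{P\,h\,\Omega_{\Sigma}}{f^{2}},
\]
which is exactly $-1$ times the representative of $\gamma_{2}(Ph)$, where $Ph\in S_{2\beta+K_{X_{\Sigma}}}=H^{0}(X_{\Sigma},K_{X_{\Sigma}}(2Y))$ is the image of $(P,h)$ under the multiplication map $S_{\beta+K_{X_{\Sigma}}}\times S_{\beta}\to S_{2\beta+K_{X_{\Sigma}}}$ of Theorem~\ref{thm:coxring}, i.e.\ the product $PH$ of the statement. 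Replacing $h$ by another lift of $H$ changes $Ph$ by an element of the Jacobian--type ideal that maps to $0$ in $H^{0,1}(Y)$, so $\gamma_{2}(PH)$ is well defined.

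It remains to descend to vanishing cohomology. The Griffiths residue $H^{2}(U_{t},\mathbb{C})\to H^{1}(Y_{t},\mathbb{C})_{van}$ intertwines the Gauss--Manin connections on the two families (being induced by a morphism of local systems, equivalently by integration over a tube around $Y_{t}$) and carries the pole--order filtration to the Hodge filtration on $H^{1}(Y)_{van}$. Taking the graded piece: $\gamma_{1}(P)$ lies in $F^{1}H^{1}(Y)=H^{1,0}(Y)$, and $\overline{\nabla}(\gamma_{1}(P))(H)$ is by definition the image of $\nabla_{H}\gamma_{1}(P)$ in $F^{0}/F^{1}=H^{0,1}(Y)$, which by the displayed computation is $c\,\gamma_{2}(PH)$ with $c=-1$ in this normalization (a different normalization of the residue only rescales $c$). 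Because $h^{0,1}(X_{\Sigma})=0$ we have $H^{0,1}(Y)_{van}=H^{0,1}(Y)$, so the formula is unambiguous, as claimed.

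The part I expect to demand the most care is not this computation but the two inputs it rests on: (a) the toric de Rham comparison --- that on a smooth complete toric surface of Picard rank possibly $\ge 2$ the cohomology $H^{2}(U,\mathbb{C})$ is still computed by \emph{global} rational $2$--forms on $X_{\Sigma}$ and that the pole--order filtration coincides with $F^{\bullet}$; this is precisely where Demazure vanishing (Theorem~\ref{thm:demazure}), the generalized Euler identity (Theorem~\ref{thm:euler}) and the Cox/Batyrev--Cox formalism (Theorems~\ref{thm:coxring}, \ref{thm:BCox}) enter, and I would invoke these rather than reprove them; and (b) the verification that $\partial_{t}$ of a rational--form representative genuinely computes $\nabla_{\partial_{t}}$ --- the subtlety is that one must work modulo exact rational forms, and one checks by tracking pole orders that the correction terms sit in a deeper level of the pole filtration and hence do not affect the graded piece computed above. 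Once (a) and (b) are granted, the whole statement collapses to the identity $\partial_{t}\big(P\,\Omega_{\Sigma}/f_{t}\big)=-P h\,\Omega_{\Sigma}/f^{2}$.
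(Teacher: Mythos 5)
Your proposal is correct and follows essentially the same route as the paper, which for this statement simply defers to Chapter 6 of Voisin's book: the standard pole-order/rational-form description of $H^2(U)$ (justified here by Demazure vanishing, Theorem~\ref{thm:demazure}), the computation $\partial_t\bigl(P\,\Omega_{\Sigma}/f_t\bigr)\big|_{t=0}=-Ph\,\Omega_{\Sigma}/f^2$ for the Gauss--Manin derivative, and descent to $H^{0,1}(Y)$ via the residue and Theorem~\ref{thm:Griffiths1969}. You also correctly isolate the only toric-specific inputs (the de Rham comparison on $U$ and the Batyrev--Cox identifications), so nothing further is needed.
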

\begin{proof}
    See p.167 of \cite{Voisin2003}
\end{proof}
Together with Theorem~\ref{thm:coxring},Theorem~\ref{thm:BCox},Theorem~\ref{thm:Griffiths1969},Theorem~\ref{thm:CarlsonGriffiths1980} and that the tangent space of $f$ in $B\subset |\beta|$ can be identified with $\Gamma(X_{\Sigma},\beta)/(f)=S_{\beta}/(f)$ (We don't take isomorphism into account), we have 
\begin{corollary}
The map\[\overline{\nabla}:H^{1,0}(Y)\rightarrow \operatorname{Hom}(T_{B,f},H^{0,1}(Y))\]agrees, up to a non-zero scalar multiple, with the map induced by multiplication:\[R_1(f)_{\beta+K_{\Sigma}}\rightarrow \operatorname{Hom}(S_{\beta}/(f),R_1(f)_{2\beta+K_{\Sigma}}).\]Therefore, the corresponding map\[T_{B,f}\rightarrow \operatorname{Hom}(H^0(Y,\omega_Y),H^1(Y,\mathcal{O}_Y))\]is likewise identified, up to a scalar multiple, with the multiplication map\[S_{\beta}/(f)\rightarrow \operatorname{Hom}(R_1(f)_{\beta+K_{\Sigma}},R_1(f)_{2\beta+K_{\Sigma}}).\]
\end{corollary}
Our goal in this note is to show that for $f$ nondegenerate and ample there exists an element in $S_{\beta}/(f)$ such that the morphism in $\operatorname{Hom}(R_1(f)_{\beta+K_{\Sigma}},R_1(f)_{2\beta+K_{\Sigma}})$ given by the product of the element is an isomorphism. By Theorem~\ref{thm:euler}, we know that $f\in J_1(f)_{\beta}$ so $S_{\beta}/(f)$ surjects onto $R_1(f)_{\beta}$. As a result, it suffices to show that there exists an element in $R_1(f)_{\beta}$ such that the morphism in $\operatorname{Hom}(R_1(f)_{\beta+K_{\Sigma}},R_1(f)_{2\beta+K_{\Sigma}})$ given by the multiplication of the element is an isomorphism because the lift of that element in $S_{\beta}/(f)$ will be the one we want.\\
\textbf{Goal}: For non-degenerate $f$, there exists an element in $R_1(f)_{\beta}$ such that the morphism in $\operatorname{Hom}(R_1(f)_{\beta+K_{\Sigma}},R_1(f)_{2\beta+K_{\Sigma}})$ induced by multiplication of that element is an isomorphism. 
\section{A Criterion}
\label{sec:criterion}
We prove an analogue of a lemma in \cite{FavalePirola2022} and \cite{TorelliGonzalezAlonso2024} using essentially the same strategy. We will make the assumption that $\dim S_{\beta+K_{\Sigma}}\neq 0$ otherwise the curve is rational and the claim is trivial. This assumption also implies that $\prod_{\rho\in \Sigma(1)}x_{\rho}\not\in J_0(f)$ so that we can use Theorem~\ref{thm:loyola}(See Remark~\ref{remark:asum}). We will also use the same notation as in Lemma~\ref{lemma:duality}.
\begin{lemma}\label{lemma:square} Take $\beta\in \operatorname{Pic}(X_{\Sigma})$ ample and $f\in S_{\beta}$ a nondegenerate element. For any $\eta\in R_1(f)_{\beta}$ such that the rank of the map $R_1(f)_{\beta+K_{\Sigma}}\xrightarrow{\cdot \eta} R_1(f)_{2\beta+K_{\Sigma}}$ is maximal, if $\eta\cdot \alpha=0\in R_1(f)_{2\beta+K_{\Sigma}}$ for $\alpha\in R_1(f)_{\beta+K_{\Sigma}}$, then $\alpha^2=0\in R_1(f)_{2\beta+2K_{\Sigma}}$.
\end{lemma}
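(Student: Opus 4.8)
The strategy mirrors that of \cite{FavalePirola2022} and \cite{TorelliGonzalezAlonso2024}: translate the multiplication map into a symmetric bilinear form on a single vector space, and then run the classical maximal-rank argument for linear systems of quadrics. Since the running assumption $\dim S_{\beta+K_\Sigma}\neq 0$ forces $\prod_{\rho\in\Sigma(1)}x_\rho\notin J_0(f)$ (Remark~\ref{remark:asum}), Theorem~\ref{thm:loyola} is available. First I would use the perfect pairing
\[
R_1(f)_{\beta+K_\Sigma}\times R_1(f)_{2\beta+K_\Sigma}\longrightarrow R_1(f)_{3\beta+2K_\Sigma}\xrightarrow{\cong}\mathbb{C}
\]
to identify $R_1(f)_{2\beta+K_\Sigma}$ with $\bigl(R_1(f)_{\beta+K_\Sigma}\bigr)^\vee$. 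Under this identification, for each $\eta'\in R_1(f)_\beta$ the multiplication map $R_1(f)_{\beta+K_\Sigma}\xrightarrow{\cdot\,\eta'}R_1(f)_{2\beta+K_\Sigma}$ becomes the bilinear form $Q_{\eta'}(\alpha_1,\alpha_2)=[\eta'\alpha_1\alpha_2]\in R_1(f)_{3\beta+2K_\Sigma}\cong\mathbb{C}$ on $V:=R_1(f)_{\beta+K_\Sigma}$, which is symmetric because $R_1(f)$ is commutative. Perfectness of the pairing gives $\operatorname{rank}(\cdot\,\eta')=\operatorname{rank}Q_{\eta'}$ and $\ker(\cdot\,\eta')=\operatorname{rad}Q_{\eta'}$, so the hypothesis says precisely that $\eta$ is chosen so that $Q_\eta$ has maximal rank, say $r$, in the linear system $\{Q_{\eta'}\}_{\eta'\in R_1(f)_\beta}$, and that $\alpha\in\operatorname{rad}Q_\eta$.

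Next I would record the dual meaning of the conclusion. By the other perfect pairing of Theorem~\ref{thm:loyola},
\[
R_1(f)_{\beta}\times R_1(f)_{2\beta+2K_\Sigma}\longrightarrow R_1(f)_{3\beta+2K_\Sigma}\xrightarrow{\cong}\mathbb{C},
\]
one has $\alpha^2=0$ in $R_1(f)_{2\beta+2K_\Sigma}$ if and only if $[\eta'\alpha^2]=Q_{\eta'}(\alpha,\alpha)=0$ for every $\eta'\in R_1(f)_\beta$. So it suffices to prove the following elementary fact about a linear system $\{Q_{\eta'}\}_{\eta'\in W}$ of symmetric bilinear forms on a finite-dimensional space $V$: if $Q_\eta$ has maximal rank $r$, then $Q_{\eta'}$ vanishes identically on $\operatorname{rad}Q_\eta$ for every $\eta'\in W$.

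To prove this I would restrict to the line $\eta_s:=\eta+s\eta'$. By lower semicontinuity of rank the locus $\{s:\operatorname{rank}Q_{\eta_s}\ge r\}$ is Zariski open and contains $s=0$, while $r$ is maximal, so $\operatorname{rank}Q_{\eta_s}=r$ for all $s$ in a neighborhood of $0$. Choosing a complement $U$ with $V=U\oplus\operatorname{rad}Q_\eta$ and $Q_\eta|_U$ nondegenerate, in the associated block form $Q_\eta=\begin{pmatrix}A&0\\0&0\end{pmatrix}$ with $A$ invertible of size $r$, and $Q_{\eta'}=\begin{pmatrix}B&C\\ C^{T}&D\end{pmatrix}$, so that $Q_{\eta_s}=\begin{pmatrix}A+sB&sC\\ sC^{T}&sD\end{pmatrix}$. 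For small $s$ the block $A+sB$ is invertible, so $\operatorname{rank}Q_{\eta_s}=r$ is equivalent to the vanishing of the Schur complement $sD-s^{2}C^{T}(A+sB)^{-1}C$; for $s\neq 0$ small, dividing by $s$ and letting $s\to 0$ yields $D=0$, i.e.\ $Q_{\eta'}|_{\operatorname{rad}Q_\eta}=0$. In particular $Q_{\eta'}(\alpha,\alpha)=0$ for all $\eta'\in R_1(f)_\beta$, and therefore $\alpha^2=0\in R_1(f)_{2\beta+2K_\Sigma}$, as claimed.

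I expect the substantive content to lie entirely in the first two paragraphs — the correct invocation of the two perfect pairings of Theorem~\ref{thm:loyola} and the commutativity/symmetry bookkeeping that makes $Q_{\eta'}$ a well-defined symmetric form on $R_1(f)$ — rather than in the concluding linear-algebra step, which is classical. The one point requiring care is reading "maximal rank" in the hypothesis as maximal over the full linear system $\{Q_{\eta'}\}_{\eta'\in R_1(f)_\beta}$, since that is exactly the family over which the Schur-complement argument is run; one should also make sure the residue-type isomorphism $R_1(f)_{3\beta+2K_\Sigma}\cong\mathbb{C}$ is in force, which here follows from the hypothesis $\dim S_{\beta+K_\Sigma}\neq 0$ via Remark~\ref{remark:asum}.
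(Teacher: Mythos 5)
Your proof is correct and rests on the same underlying mechanism as the paper's: perturb $\eta$ along the pencil $\eta_s=\eta+s\eta'$, use that the rank cannot exceed its maximal value $r$ while lower semicontinuity keeps it at $r$ for small $s$, deduce $\eta'\alpha^2=0$ for every $\eta'\in R_1(f)_\beta$, and then kill $\alpha^2$ via the injection $R_1(f)_{2\beta+2K_\Sigma}\hookrightarrow (R_1(f)_\beta)^\vee$ from Theorem~\ref{thm:loyola}. The difference is in how the middle step is executed. The paper chooses a curve $\gamma(t)$ of kernel elements with $\gamma(0)=\alpha$ and $\gamma(t)\cdot(\eta+t\eta')=0$, Taylor-expands $\gamma(t)=\alpha+\alpha't^n+O(n+1)$, and extracts the first-order relation $\eta'\alpha+\eta\alpha'=0$, which multiplied by $\alpha$ gives $\eta'\alpha^2=0$. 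You instead transport everything to the single space $V=R_1(f)_{\beta+K_\Sigma}$ via the perfect pairing, realize each multiplication map as a symmetric bilinear form $Q_{\eta'}$, and run the block/Schur-complement computation to conclude $Q_{\eta'}$ vanishes on $\operatorname{rad}Q_\eta$. Your version buys two things: it sidesteps the existence of the analytic curve $\gamma(t)$ through $\alpha$ (which the paper asserts without comment; it does hold because the kernels have constant dimension near $t=0$ and hence form a vector bundle admitting a local section through $\alpha$, but that deserves a word), and it yields the slightly stronger conclusion that $Q_{\eta'}$ vanishes identically on the whole radical, not just on the diagonal. The paper's version is more elementary in that it never needs to symmetrize, only the final duality step. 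Both are complete; just make sure, as you note, that "maximal" is read as maximal over all of $R_1(f)_\beta$, since that is what forces $\operatorname{rank}Q_{\eta_s}\le r$ along the entire pencil.
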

\begin{proof}
Take $\eta\in R_1(f)_{\beta}$ such that $R_1(f)_{\beta+K_{\Sigma}}\xrightarrow{\cdot \eta} R_1(f)_{2\beta+K_{\Sigma}}$ has maximal rank and $\alpha\in R_1(f)_{\beta+K_{\Sigma}}$ such that $\eta\cdot \alpha=0\in R_1(f)_{2\beta+K_{\Sigma}}$. Without loss of generality, we may assume $\alpha\neq 0\in R_1(f)_{\beta+K_{\Sigma}}$. First, we will show that $\eta'\alpha^2=0\in R_1(f)_{3\beta+2K_{\Sigma}}\cong \mathbb{C}$ for any $\eta'\in R_1(f)_{\beta}$. This is obviously true for $\eta$. Take any other $\eta'\in R_1(f)_{\beta}$. If $\eta'\alpha=0$, there is nothing to prove. Therefore, we may assume that $\eta'\alpha\neq 0\in R_1(f)_{2\beta+K_{\Sigma}}$. Consider $h_t=\eta+t\eta'$. Since $\cdot \eta$ has maximal rank and having maximal rank is an open condition, $\cdot h_t$ has maximal rank for all $t$ in a small analytic disk $\mathbb{D}$. The kernels of the maps \[R_1(f)_{\beta+K_{\Sigma}}\xrightarrow{\cdot h_t}R_1(f)_{2\beta+K_{\Sigma}}\]
have constant dimension and form a holomorphic vector subbundle on this small disk $\mathbb{D}$. In particular, we may choose a path $\gamma(t)\in R_1(f)_{\beta+K_{\Sigma}}$ such that $\gamma(0)=\alpha$ and $\gamma(t)\cdot (\eta+t\eta')=0$ for all $t$. Then for $n\ge 1$ the expansion of $\gamma(t)$ is of the form 
\[\gamma(t)=\alpha+\alpha't^n+O(t^{n+1})\]
Then, we have 
\[(\alpha+\alpha't^n+O(t^{n+1}))\eta+(\alpha+\alpha't^n+O(t^{n+1}))t\eta'\equiv 0\]
Since $\eta\cdot \alpha=0$, we have 
\[\eta'\alpha t+t^n\eta \alpha'+O(t^{n+1})\equiv 0\]
Since $\eta'\alpha\neq 0$, we have the only possibility for this to hold is when $n=1$ and $\eta'\alpha+\eta\alpha'=0$. Hence, we have $(\eta'\alpha+\eta\alpha')\alpha=0$ and $\eta'\alpha^2=0\in R_1(f)_{3\beta+2K_{\Sigma}}\cong \mathbb{C}$. 

This means that the map induced by multiplication of $\alpha^2$
\[R_1(f)_{\beta}\xrightarrow{\cdot \alpha^2}R_1(f)_{3\beta+2K_{\Sigma}}\cong \mathbb{C}\]
is the zero map.
By Theorem~\ref{thm:loyola}, there is an injection $R_1(f)_{2\beta+2K_{X_{\Sigma}}}\hookrightarrow (R_1(f)_{\beta})^{\vee}$ that sends an element in $R_1(f)_{2\beta+2K_{X_{\Sigma}}}$ to the map it induced in $(R_1(f)_{\beta})^{\vee}$ given by multiplication. Since $\alpha^2$ and $0$ induce the same map, $\alpha^2=0$ as elements in $R_1(f)_{2\beta+2K_{\Sigma}}$.
\end{proof}
To prove the main theorem, it suffices to show that 
\[\alpha^2=0\in R_1(f)_{2\beta+2K_{\Sigma}}\Longrightarrow \alpha=0\in R_1(f)_{\beta+K_{\Sigma}}\]
For $a\in V$, we denote $[a]\in \mathbb{P}(V)$ the image of $a$. For the above reason, we consider the morphism 
\[s:\mathbb{P}(S_{\beta+K_{\Sigma}})\times \mathbb{P}(S_{\beta+K_{\Sigma}})\rightarrow \mathbb{P}(S_{2\beta+2K_{\Sigma}})\]
where $s([\alpha],[\beta])=[\alpha\beta]\in \mathbb{P}(S_{2\beta+2K_{\Sigma}})$. 

We will denote $D$ to be the image of this morphism $s$. Note that $\mathbb{P}(J_1(f)_{2\beta+2K_{\Sigma}})\subset \mathbb{P}(S_{2\beta+2K_{\Sigma}})$. We will denote $D^{2\beta+2K_{\Sigma}}_J$ as the intersection of $D$ with $\mathbb{P}(J_1(f)_{2\beta+2K_{\Sigma}})$.\\
Also, recall the notation in Lemma~\ref{lemma:duality}, for $\alpha\neq 0\in S_{\beta+K_{\Sigma}}=R_1(f)_{\beta+K_{\Sigma}}$, denote $K_{\gamma}(\alpha)$ to be the kernel of the map $R_1(f)_{\gamma}\xrightarrow{\cdot \alpha}R_1(f)_{\gamma+\beta+{K_{\Sigma}}}$.
\begin{theorem}
    Let $\beta$ be an ample class such that $J_1(f)_{2\beta+2K_{\Sigma}}\neq 0$. Let $C$ be the zero locus of a non-degenerate $f\in S_{\beta}$ and $g$ be the genus of $C$. If $\dim (D^{2\beta+2K_{\Sigma}}_J)<g-1$, the rank of $R_1(f)_{\beta+K_{\Sigma}}\xrightarrow{\cdot \eta} R_1(f)_{2\beta+K_{\Sigma}}$ is equal to $g=\dim R_1(f)_{\beta+K_{\Sigma}}=h^{1,0}(C)$ for some $\eta\in R_1(f)_{\beta}$. 
    \label{thm:bound}
\end{theorem}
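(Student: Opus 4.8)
The plan is to turn the statement into a vanishing assertion about a kernel and then bound that kernel by decomposable elements. By the corollary at the end of Section~\ref{sec:prelim} (the identification of the infinitesimal variation of Hodge structure with the multiplication $R(f)_{\beta}\to\operatorname{Hom}(R_1(f)_{K_{\Sigma}+\beta},R_1(f)_{K_{\Sigma}+2\beta})$) together with the surjection $R(f)_{\beta}\twoheadrightarrow R_1(f)_{\beta}$, it suffices to find $\eta\in R_1(f)_{\beta}$ for which $\cdot\eta\colon R_1(f)_{\beta+K_{\Sigma}}\to R_1(f)_{2\beta+K_{\Sigma}}$ is an isomorphism. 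By Remark~\ref{remark:genus} and Theorem~\ref{thm:BCox} both spaces have dimension $g$, so this is equivalent to some $\eta$ attaining rank $g$. I would therefore fix $\eta$ of maximal rank $r$ in the family $\{\cdot\eta\}_{\eta\in R_1(f)_{\beta}}$, suppose $r<g$, and work towards a contradiction with $\dim D^{2\beta+2K_{\Sigma}}_{J}<g-1$. Put $K:=\ker\!\big(\cdot\eta\colon R_1(f)_{\beta+K_{\Sigma}}\to R_1(f)_{2\beta+K_{\Sigma}}\big)$, a nonzero subspace of $R_1(f)_{\beta+K_{\Sigma}}=S_{\beta+K_{\Sigma}}$ of dimension $g-r\ge 1$.

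The first step is that $K$ is isotropic for the product into $R_1(f)_{2\beta+2K_{\Sigma}}$. By Lemma~\ref{lemma:square}, $\alpha^{2}=0$ in $R_1(f)_{2\beta+2K_{\Sigma}}$, i.e.\ $\alpha^{2}\in J_1(f)_{2\beta+2K_{\Sigma}}$, for every $\alpha\in K$; polarizing (we are over $\mathbb{C}$) gives $\alpha\alpha'=\tfrac12\big((\alpha+\alpha')^{2}-\alpha^{2}-\alpha'^{2}\big)\in J_1(f)_{2\beta+2K_{\Sigma}}$ for all $\alpha,\alpha'\in K$. Hence the Segre map restricts to $s\colon\mathbb{P}(K)\times\mathbb{P}(K)\to\mathbb{P}(J_1(f)_{2\beta+2K_{\Sigma}})$ with image in $D^{2\beta+2K_{\Sigma}}_{J}$. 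Since $S$ is a polynomial ring, a decomposable $\alpha\alpha'\in S_{2\beta+2K_{\Sigma}}$ determines the unordered pair $\{[\alpha],[\alpha']\}$ up to the finitely many ways of grouping its irreducible factors into two factors of class $\beta+K_{\Sigma}$, so $s$ has finite fibres and $\dim D^{2\beta+2K_{\Sigma}}_{J}\ge\dim\big(\mathbb{P}(K)\times\mathbb{P}(K)\big)=2(g-r-1)$. This already contradicts the hypothesis when $r\le(g-1)/2$.

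To reach $r=g$ and not merely $r>(g-1)/2$, I would let $\eta$ move. On the dense open set of $\mathbb{P}(R_1(f)_{\beta})$ where $\cdot\eta$ has rank $r$ the kernels $K_{\eta}$ form a rank-$(g-r)$ subbundle; let $\mathcal{W}\subseteq\mathbb{P}(S_{\beta+K_{\Sigma}})=\mathbb{P}^{g-1}$ be the closure of $\bigcup_{\eta}\mathbb{P}(K_{\eta})$, i.e.\ the image of the projectivized kernel bundle. The condition ``$\alpha^{2}\in J_1(f)_{2\beta+2K_{\Sigma}}$'' is closed and, by Lemma~\ref{lemma:square}, holds on each $\mathbb{P}(K_{\eta})$, hence on $\mathcal{W}$; as $[\alpha]\mapsto[\alpha^{2}]$ is injective ($S$ being a domain), it embeds $\mathcal{W}$ into $D^{2\beta+2K_{\Sigma}}_{J}$, so $\dim D^{2\beta+2K_{\Sigma}}_{J}\ge\dim\mathcal{W}$. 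The claim I would prove is that $r<g$ forces $\mathcal{W}=\mathbb{P}^{g-1}$ — equivalently, that every $[\alpha]\in\mathbb{P}(S_{\beta+K_{\Sigma}})$ is annihilated by some maximal-rank $\eta$ — which yields $\dim D^{2\beta+2K_{\Sigma}}_{J}\ge g-1$, the contradiction we want. For this one uses the perfect pairings of Theorem~\ref{thm:loyola} (they make $\cdot\alpha\colon R_1(f)_{\beta}\to R_1(f)_{2\beta+K_{\Sigma}}$ non-injective, so $\{\eta:\eta\alpha=0\}\neq 0$) together with a dimension count on the incidence variety $\{([\eta],[\alpha]):\eta\alpha=0\}$ to select a maximal-rank representative inside this annihilator.

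I expect the last step — showing the kernel sweep $\mathcal{W}$ fills $\mathbb{P}^{g-1}$, i.e.\ ruling out that $\mathbb{P}(\{\eta:\eta\alpha=0\})$ lies entirely inside the (proper, closed) non-maximal-rank locus for some $\alpha$ — to be the main obstacle; it is exactly here that the finer structure of $J_1(f)$ on the surface and the Castelnuovo uniform position techniques in the spirit of \cite{FavalePirola2022} and \cite{TorelliGonzalezAlonso2024} should enter. By contrast, the two ingredients that make the rest routine are the characteristic-zero polarization of the first step and the injectivity coming from the pairings in Theorem~\ref{thm:loyola}.
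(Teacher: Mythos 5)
Your reduction, your use of Lemma~\ref{lemma:square}, and your first step are all sound: the polarization argument showing that the kernel $K$ of a maximal-rank $\cdot\eta$ is isotropic (so that $\mathbb{P}(K)\times\mathbb{P}(K)$ maps into $D^{2\beta+2K_{\Sigma}}_J$ with finite fibres) is correct and already rules out $r\le(g-1)/2$. But the step you yourself flag as the main obstacle --- that $r<g$ forces the kernel sweep $\mathcal{W}$ to fill $\mathbb{P}^{g-1}$ --- is a genuine gap, and it is not how the paper closes the argument. The statement you would need is stronger than anything available: for a given $[\alpha]$ the annihilator $W_{\beta}(\alpha)=\{\eta\in S_{\beta}:\eta\alpha\in J_1(f)_{2\beta+K_{\Sigma}}\}$ is indeed always nonzero (it contains $f$ itself, whose class in $R_1(f)_{\beta}$ is zero by the Euler identity), but nothing in Theorem~\ref{thm:loyola} forces it to meet the open locus of maximal-rank $\eta$'s. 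Also, Castelnuovo uniform position plays no role here; in the paper it enters only in the separate upper bound on $\dim D_J$ (Theorem~\ref{thm:bb}), not in the proof of Theorem~\ref{thm:bound}.

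The ingredient you are missing is Lemma~\ref{lemma:duality}, which lets one avoid proving that the locus $Y=\{[\alpha]:\alpha^2\in J_1(f)_{2\beta+2K_{\Sigma}}\}$ fills projective space. The paper takes the incidence variety $\tilde I=\{([\eta],[\alpha]):\eta\alpha\in J_1(f)_{2\beta+K_{\Sigma}}\}$, notes that under the contradiction hypothesis it dominates $\mathbb{P}(S_{\beta})$, and deduces $\dim Y\ge\dim\mathbb{P}(S_{\beta})-\dim\mathbb{P}(W_{\beta}(\alpha))$. Lemma~\ref{lemma:duality} converts this codimension, which lives in degree $\beta$, into $g-\dim K_{\beta+K_{\Sigma}}(\alpha)$, a quantity in degree $\beta+K_{\Sigma}$. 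The deficit $\dim K_{\beta+K_{\Sigma}}(\alpha)$ is then recovered by a second incidence variety $Z\subset Y\times\mathbb{P}(S_{\beta+K_{\Sigma}})$ whose fibre over $[\alpha]$ contains $\mathbb{P}(K_{\beta+K_{\Sigma}}(\alpha))$; its image under the (finite) Segre morphism still lies in $D^{2\beta+2K_{\Sigma}}_J$, because one uses all products $\alpha\beta$ with $\beta$ in the full annihilator of $\alpha$ rather than only squares or products inside a single kernel $K_{\eta}$, and the two contributions add to exactly $g-1$. If you want to complete your route, replace the claim ``$\mathcal{W}=\mathbb{P}^{g-1}$'' by this two-stage count: you do not need every $\alpha$ to be killed by some maximal-rank $\eta$, only that whatever dimension $Y$ loses is regained fibrewise through $K_{\beta+K_{\Sigma}}(\alpha)$, and that trade-off is precisely the content of Lemma~\ref{lemma:duality}.
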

\begin{remark}
    The assumption that $J_1(f)_{2\beta+2K_{\Sigma}}\neq 0$ is satisfied, for example, when $\beta+K_{\Sigma}$ is ample.
\end{remark}
\begin{proof}
     First, we observe that the morphism 
     \[s:\mathbb{P}(S_{\beta+K_{\Sigma}})\times \mathbb{P}(S_{\beta+K_{\Sigma}})\rightarrow \mathbb{P}(S_{2\beta+2K_{\Sigma}})\] is finite. Indeed, via the identification in Theorem~\ref{thm:coxring}, the above morphism can be identified with 
    \[|\beta+K_{\Sigma}|\times |\beta+K_{\Sigma}|\rightarrow |2\beta+2K_{\Sigma}| \]
    that sends a pair of effective divisors $(E_1,E_2)$ where $E_i\in |\beta+K_{\Sigma}|$ for $i=1,2$ to an effective divisor $E_1+E_2\in |2\beta+2K_{\Sigma}|$. Take an effective divisor $E\sim 2\beta+2K_{\Sigma}$ such that $E=E_1+E_2$ for some effective divisor $E_1,E_2\sim \beta+K_{\Sigma}$. We may write $E=\sum^n_{j=1}k_j F_j$, $E_1=\sum^n_{j=1}k_{j,1} F_j$ and $E_2=\sum^n_{j=1}k_{j,2} F_j$ where $F_j$'s are prime divisors of $X_{\Sigma}$, $k_j,k_{j,1},k_{j,2}\in \mathbb{Z}_{\ge 0}$ and $k_j=k_{j,1}+k_{j,2}$. Hence, the number of pairs $(E_1,E_2)$ such that $E_1+E_2=E$ is finite. Since $s$ is a morphism between projective varieties, it is a proper morphism. It has finite fibers so it is a finite morphism. 

    We will now proceed to prove our Theorem. For the sake of contradiction, assume that for any $\eta\in R_1(f)_{\beta}$, the rank $R_1(f)_{\beta+K_{\Sigma}}\xrightarrow{\cdot \eta} R_1(f)_{2\beta+K_{\Sigma}}$ is strictly less than $g$. We will show that this implies that $\dim (D^{2\beta+2K_{\Sigma}}_J)\ge g-1$.\\
    By Remark~\ref{remark:genus}, we have $S_{\beta+K_{\Sigma}}=R_1(f)_{\beta+K_{\Sigma}}$
    Consider \[Y=\{[\alpha]\in \mathbb{P}(S_{\beta+K_{\Sigma}})|\alpha^2\in J_1(f)_{2\beta+2K_{\Sigma}} \}=\{[\alpha]\in \mathbb{P}(R_1(f)_{\beta+K_{\Sigma}})|\alpha^2=0\in R_1(f)_{2\beta+2K_{\Sigma}} \}\]
    and the incidence correspondence 
    \[\tilde{I}=\{([\eta],[\alpha])\in \mathbb{P}(S_{\beta})\times \mathbb{P}(S_{\beta+K_{\Sigma}})|\eta\alpha\in J_1(f)_{2\beta+K_{\Sigma}}\}\]
    with projections $p_1$ and $p_2$.\\
    Consider the map 
    \[S_{\beta}\xrightarrow{\cdot \alpha}R_1(f)_{2\beta+K_{\Sigma}} \]Denote $W_{\beta}(\alpha)$ to be its kernel. We have 
    \[\mathbb{P}(W_{\beta}(\alpha))=\{[\eta]\in \mathbb{P}(S_{\beta})|\eta\alpha\in J_1(f)_{2\beta+K_{\Sigma}}\}\]
    It is easy to see that $S_{\beta}/W_{\beta}(\alpha)\cong R_1(f)_{\beta}/K_{\beta}(\alpha)$.\\
    By our assumption, we have $p_1:\tilde{I}\rightarrow \mathbb{P}(S_{\beta})$ is surjective. We have an irreducible component $I$ of $\tilde{I}$ that dominates $\mathbb{P}(S_{\beta})$. Hence,
    \[\dim (I)\ge \dim \mathbb{P}(S_{\beta}).\]
    Let $U$ be the open dense subset of $I$ consisting of pairs $([\eta],[\alpha])$ such that $R_1(f)_{\beta+K_{\Sigma}}\xrightarrow{\cdot \eta} R_1(f)_{2\beta+K_{\Sigma}}$ has maximal rank. Then, we have $\alpha^2\in J_1(f)_{2\beta+2K_{\Sigma}}$ for $[\alpha]$ in the pair $([\eta],[\alpha])$ by Lemma~\ref{lemma:square}. This implies that $p_2(U)\subset Y$. Then, we have $\dim (Y)\ge \dim (p_2(U))=\dim (p_2(I))$. We also have, for $[\alpha]\in p_2(I)$
    \[\dim (p_2^{-1}([\alpha]))+\dim Y\ge \dim (p_2^{-1}([\alpha]))+\dim (p_2(I))\ge \dim I\]
    and $\dim (p_2^{-1}([\alpha])=\dim \mathbb{P}(W_{\beta}(\alpha))$. Therefore, we have 
    \[\dim Y\ge \dim I-\dim \mathbb{P}(W_{\beta}(\alpha))\ge \dim \mathbb{P}(S_{\beta})-\dim \mathbb{P}(W_{\beta}(\alpha)) \]
    By lemma~\ref{lemma:duality}, we have 
    \[\dim \mathbb{P}(S_{\beta})-\dim \mathbb{P}(W_{\beta}(\alpha))=\dim \mathbb{P}(S_{\beta}/W_{\beta}(\alpha))+1=\dim \mathbb{P}(R_1(f)_{\beta}/K_{\beta}(\alpha))+1\]
    \[=\dim \mathbb{P}(R_1(f)_{\beta+K_{\Sigma}}/K_{\beta+K_{\Sigma}}(\alpha))+1=\dim R_1(f)_{\beta+K_{\Sigma}}-\dim K_{\beta+K_{\Sigma}}(\alpha)=g-\dim K_{\beta+K_{\Sigma}}(\alpha)\]
    This implies that 
    \[\dim Y\ge g-\dim K_{\beta+K_{\Sigma}}(\alpha)\]
    Now, consider \[Z=\{([\alpha],[\beta])\in Y\times \mathbb{P}(S_{\beta+K_{\Sigma}})|\alpha\beta\in J_1(f)_{2\beta+2K_{\Sigma}}\}\]
    It has a projection map $\pi_1$ to $Y$. Notice that $\pi_1$ is surjective with $([\alpha],[\alpha])\in \pi_1^{-1}([\alpha])=[\alpha]\times \mathbb{P}(K_{\beta+K_{\Sigma}}(\alpha))$ for any $[\alpha]\in Y$. Also, the restriction of the morphism $s$ to $Z$ has image in $D^{2\beta+2K_{\Sigma}}_J$. We will denote this map as $\psi$.
    Let $Z'$ be an irreducible component of $Z$ that dominants $Y$. For a generic $[\alpha]\in Y$, by fiber-dimension theorem,
    \[\dim \mathbb{P}(K_{\beta+K_{\Sigma}}(\alpha))=\dim \pi_1^{-1}([\alpha])=\dim Z'-\dim Y\]
    Since the morphsim $s$ is finite hence so is $\psi$, we have 
    \[\dim D^{2\beta+2K_{\Sigma}}_J\ge \dim \psi(Z')=\dim Z'= \dim (Y)+\dim \mathbb{P}(K_{\beta+K_{\Sigma}}(\alpha))\]
    \[=\dim Y+\dim K_{\beta+K_{\Sigma}}(\alpha)-1\ge g-1\]
    This is what we want.
    \end{proof}
\section{Hirzebruch Surface}
\label{sec:Hirzebruch}
Now, we will study the case for Hirzebruch surface. In particular, we want to bound $\dim D_J^{2\beta+2K_{\Sigma}}$ when $X_{\Sigma}=\mathcal{H}_r$ for $r\ge0$. The main reference is \cite{cox2011toric}
\subsection{GIT Quotient} All complete simplicial toric varieties have a geometric quotient construction by the group $G=\operatorname{Hom}(\text{Cl}(X_{\Sigma}),\mathbb{C}^*)\subset (\mathbb{C}^*)^{\Sigma(1)}$ (See Chapter 5 of \cite{cox2011toric}). The one for Hirzebruch surface is the following:\\
For $r\ge 0$, the ray generators of the fan for a Hirzebruch surface $\mathcal{H}_r$ are $u_1=-e_1+re_2, u_2=e_2,u_3=e_1,u_4=-e_2$, where $e_i$ are standard basis in $N\otimes \mathbb{R}=\mathbb{R}^2$.\\
We have $(t_1,\cdots, t_4)\in (\mathbb{C}^*)^{4}$ lies in $G$ if and only if 
\[t_1^{\langle m,-e_1+re_2 \rangle}t_2^{\langle m,e_2 \rangle}t_3^{\langle m,e_1\rangle}t_4^{\langle m,-e_2 \rangle}=1\]
When $m=e_1$,
\[t_1^{-1}t_3=1\Longrightarrow t_3=t_1\]
When $m=e_2$,
\[t_1^rt_2t_4^{-1}=1\Longrightarrow t_4=t_1^rt_2\]
Then, we have 
\[G=\{(\lambda,\mu,\lambda,\lambda^r\mu)|\lambda,\mu\in \mathbb{C}^*\}\simeq (\mathbb{C}^*)^2\]
For $\sigma_1=\text{Cone}(-e_1+re_2, e_2)$, $\sigma_2=\text{Cone}(e_1,e_2)$, $\sigma_3=\text{Cone}(e_1,-e_2)$ and $\sigma_4=\text{Cone}(-e_2,-e_1+re_2)$, if we use the coordinate of Cox ring $S=\mathbb{C}[x_1,x_2,x_3,x_4]$ introduced in section~\ref{subsec:cox} where $x_i$ corresponds to the ray $u_i$, we have the exceptional set is given by $Z(\Sigma)=V(B(\Sigma))$ where $B(\Sigma)=\{x^{\hat{\sigma_1}}=x_3x_4,x^{\hat{\sigma_2}}=x_1x_4,x^{\hat{\sigma_3}}=x_1x_2,x^{\hat{\sigma_4}}=x_2x_3\}$.\\
Then, we have the geometric quotient $(\mathbb{C}^4-Z(\Sigma))\sslash G$ with the action $(\lambda,\mu)\cdot(a,b,c,d)= (\lambda a,\mu b,\lambda c,\lambda^r\mu d)$. Also, $(\mathbb{C}^4-Z(\Sigma))\sslash G$ is isomorphic to $\mathcal{H}_r$. This is the quotient construction for Hirzebruch surface.
\subsection{Picard Group} By Orbit-Cone correspondence, there are divisors $D_i$ corresponding to the rays $u_i$ for $i=1,2,3,4$ with the relations
\[0\sim \operatorname{div}(\chi^{e_1})=\langle e_1,u_1\rangle D_1+\langle e_1,u_2\rangle D_2+\langle e_1,u_3\rangle D_3+\langle e_1,u_4\rangle D_4=-D_1+D_3\]
\[0\sim \operatorname{div}(\chi^{e_2})=\langle e_2,u_1\rangle D_1+\langle e_2,u_2\rangle D_2+\langle e_2,u_3\rangle D_3+\langle e_2,u_4\rangle D_4=rD_1+D_2-D_4\]
In particular, we have $D_1=D_3$ and $rD_1+D_2=D_4$ in $\operatorname{Pic}(\mathcal{H}_r)$. This implies that $\operatorname{Pic}(\mathcal{H}_r)=\mathbb{Z}^2$ is the generated by $D_1$ and $D_2$.\\
We also know that $aD_1+bD_2$ is ample and very ample if and only if $a>rb$ and $b>0$. There are also the intersection relations
\[D_1\cdot D_1=0,\quad D_2\cdot D_2=-r,\quad D_3\cdot D_3=0,\quad D_4\cdot D_4=r\]
\[D_1\cdot D_2=D_2\cdot D_3=D_3\cdot D_4=D_4\cdot D_1=1.\]
The canonical divisor is given by 
\[K_{\mathcal{H}_r}=-D_1-D_2-D_3-D_4=-(r+2)D_1-2D_2\]
Lastly, by Noether formula (Special case of Hirzebruch Riemann Roch. See Chapter 5 of \cite{huybrechts2004complex}) and Demazure Vanishing (Theorem~\ref{thm:demazure}), for $a>rb$ and $b>0$, we have 
\[h^0(\mathcal{H}_r,aD_1+bD_2)=\chi(\mathcal{H}_r,aD_1+bD_2)=\frac{(aD_1+bD_2)\cdot (aD_1+bD_2-K_{\mathcal{H}_r})}{2}+\chi(\mathcal{H}_r,\mathcal{O}_{\mathcal{H}_r})\]
\[=(a+1)(b+1)-\frac{rb(b+1)}{2}\]
Also, the genus of the curve given by a section of $aD_1+bD_2$  is 
\[1+\frac{1}{2}((aD_1+bD_2)^2+(aD_1+bD_2)\cdot K_{\mathcal{H}_r} )\]
\[=(b-1)(a-1-\frac{rb}{2})\]
\subsection{A Linear System}
For an ample divisor $\beta=a D_1+bD_2$ where $a>rb$ and $b>0$ and a nondegenerate section $f\in \Gamma(\mathcal{H}_r,a D_1+bD_2)$, from Definition~\ref{def:ideal}, we can consider $J_1(f)_{\beta}\subset S_{\beta}\cong \Gamma(\mathcal{H}_r,a D_1+bD_2)$.\\
By Theorem~\ref{thm:euler} (the Generalized Euler Identity), since $u_2+u_4=0$, $u_1+u_3-ru_2=0$ and $u_1+u_3+ru_4=0$, there are numbers $A,B,C\neq 0$ such that
\begin{align*}
    Af&=x_2\frac{\partial f}{\partial x_2}+x_4\frac{\partial f}{\partial x_4}\\
    Bf&=x_1\frac{\partial f}{\partial x_1}-rx_2\frac{\partial f}{\partial x_2}+x_3\frac{\partial f}{\partial x_3}
\end{align*}
\begin{theorem}\label{thm:birational}The linear system $J_1(f)_{\beta}$ restricted to the curve $C=\{f=0\}$ is base point free and the morphism induced by it is birational onto its image.
\end{theorem}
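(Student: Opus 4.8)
The plan is to treat the two claims separately; base-point-freeness is immediate from nondegeneracy, while birationality is reduced, via the ruling, to a point-separation statement on a general fibre. Since $J_0(f)\subseteq J_1(f)$, the linear system $J_1(f)_\beta$ contains the four sections $x_\rho\,\partial f/\partial x_\rho\in S_\beta$, and by the equivalent formulation of nondegeneracy recalled above these have no common zero on $X_\Sigma=\mathcal{H}_r$, hence none on $Z$. Moreover $Z$ is smooth by nondegeneracy and connected (e.g.\ because $H^1(\mathcal{H}_r,\mathcal{O}_{\mathcal{H}_r}(-Z))=0$ by Kodaira vanishing), hence irreducible, so a section of $\beta$ vanishing along $Z$ is a scalar multiple of $f$; since $f\in J_0(f)$ by the generalized Euler identities, the restriction $J_1(f)_\beta\to H^0(Z,\mathcal{O}_Z(\beta))$ has kernel exactly $\mathbb{C}f$, and $J_1(f)_\beta|_Z$ defines a morphism $\phi\colon Z\to\mathbb{P}^{N}$, with $N=\dim J_1(f)_\beta-2$, onto a nondegenerate curve.

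For birationality I would first show that $\phi$ dominates the ruling $\mathcal{H}_r\to\mathbb{P}^1$, i.e.\ the map $[x_1:x_3]$. Since $J(f)\subseteq J_1(f)$, the system $J_1(f)_\beta$ contains $x_1\,\partial f/\partial x_1$, $x_3\,\partial f/\partial x_1$, $x_1\,\partial f/\partial x_3$ and $x_3\,\partial f/\partial x_3$; as $\partial f/\partial x_1$ and $\partial f/\partial x_3$ are nonzero (nondegeneracy forces $f$ to involve $x_1$ and $x_3$) and $S_{-D_1}=0$, they do not vanish identically on $Z$, so their common zero locus on $Z$ is finite, and away from it $[x_1:x_3]$ is recovered from $\phi$ (the $\mathbb{C}f$-ambiguity disappears on $Z$). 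Hence for a general pair $p,q\in Z$ with $\phi(p)=\phi(q)$ one has $[x_1:x_3](p)=[x_1:x_3](q)$, so $p$ and $q$ lie on one fibre of the ruling; since $Z$ is a smooth irreducible curve it now suffices to show that $\phi$ separates two general points of a general fibre $F\cong\mathbb{P}^1$.

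On a general $F$ one has $\mathcal{O}_F(\beta)\cong\mathcal{O}_{\mathbb{P}^1}(b)$ with $b=\beta\cdot D_1$, the set $Z\cap F$ consists of $b$ reduced points, and $S_\beta\to H^0(F,\mathcal{O}(b))$ is surjective by the (standard toric) vanishing $H^1(\mathcal{H}_r,\mathcal{O}_{\mathcal{H}_r}(\beta-D_1))=0$. If $J_1(f)_\beta$ restricts onto all of $H^0(F,\mathcal{O}(b))$, then the induced linear system on $F$ is complete of degree $b$, hence embeds $F$ as a rational normal curve; as $\phi$ restricted to $Z\cap F$ factors through this embedding, $\phi$ separates those $b$ points and we are done. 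So the remaining step — and the main obstacle — is to prove this surjectivity for every nondegenerate $f$. Restriction to $F$ collapses the visible part of the ideal: since $x_1,x_3$ restrict to nonzero constants on $F$ and the Euler identities express $x_2\,\partial f/\partial x_2|_F$ and $x_4\,\partial f/\partial x_4|_F$ in terms of $f|_F,\partial f/\partial x_1|_F,\partial f/\partial x_3|_F$, the first Jacobian ideal $J(f)_\beta$ restricts to only a $3$-dimensional subspace of $H^0(F,\mathcal{O}(b))$; thus one must genuinely use the extra elements of $J_1(f)_\beta=(J_0(f):\textstyle\prod_\rho x_\rho)_\beta$. I would carry this out by a dimension count, using the complete-intersection structure of $J_1(f)$ (valid because $f$ is nondegenerate) and the perfect pairings of Theorem~\ref{thm:loyola} to control $\dim J_1(f)_\beta$ and the kernel of restriction to $F$, forcing that restriction to be onto. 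This is precisely where the ``bizarre'' behaviour of $J_1(f)_\beta$ for Picard rank $2$, flagged in the introduction, is felt, and it is the crux; for the explicit families underlying Theorem~\ref{thm:blow} the surjectivity is instead verified directly in Macaulay2.
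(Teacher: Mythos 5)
Your base-point-freeness argument and the reduction of birationality to generic injectivity are fine, and the first half of your injectivity argument (recovering $[x_1:x_3]$ from the sections $x_i\,\partial f/\partial x_j$ with $i,j\in\{1,3\}$, which all lie in $J(f)_\beta\subset J_1(f)_\beta$ and have degree $\beta$) is sound. But the proof then stalls exactly where you say it does: you reduce to showing that $J_1(f)_\beta$ separates the points of $Z\cap F$ on a general fibre $F$ of the ruling, propose to get this from surjectivity of $J_1(f)_\beta\to H^0(F,\mathcal{O}_F(b))$, and never prove that surjectivity. That is a genuine gap, not a routine verification: as you yourself note, $J(f)_\beta$ restricts to a very small subspace on $F$, so the claim hinges on the poorly understood extra elements of $J_1(f)_\beta$, and neither the complete-intersection property nor the pairings of Theorem~\ref{thm:loyola} obviously control the restriction map to a single fibre. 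As written, the argument does not establish the theorem.

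The missing idea is that no completeness (or even any dimension count) of the restricted system on $F$ is needed; one can pin down the fibre coordinate pointwise using the remaining Euler sections. The paper works on the open set $W=Z\cap\{\partial f/\partial x_i\neq 0\}_{i=1,\dots,4}$, uses the Euler identities together with $f(p)=0$ to show that $W$ avoids the exceptional set in such a way that it is covered by the charts $U_{12}=\{x_1=x_2=1\}$ and $U_{23}=\{x_2=x_3=1\}$, and then observes that the four elements $x_1\partial f/\partial x_1$, $x_1\partial f/\partial x_3$, $x_2\partial f/\partial x_2$, $x_1^{r}x_2\partial f/\partial x_4$ of $J_1(f)_\beta$ recover all four partials $\partial f/\partial x_i(p)$ up to one common scale at a point of $W\cap U_{12}$. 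The two Euler identities then determine $x_4(p)$ and $x_3(p)$ as ratios of these partials in which the common scale cancels, so $p$ is determined by $\phi(p)$ on $W\cap U_{12}$ (and symmetrically on $W\cap U_{23}$). This pointwise use of the Euler identities is exactly what replaces your unproven fibre-surjectivity step: you already invoked one Euler identity to remove the $\mathbb{C}f$-ambiguity, and using both identities once more at the level of points finishes the argument without any statement about $H^0(F,\mathcal{O}_F(b))$.
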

\begin{proof}
First, the linear system $J_1(f)_{\beta}$ contains $\{x_1\frac{\partial f}{\partial x_1},x_2\frac{\partial f}{\partial x_2},x_3\frac{\partial f}{\partial x_3},x_4\frac{\partial f}{\partial x_4} \}$. The nondegeneracy of $f$ (Theorem~\ref{coxnondegenerate} implies that these sections share no common zeros outside the exceptional locus $Z(\Sigma)$. Because points of the smooth curve $C$ corresponds to orbits in $\mathbb{C}^{\Sigma(1)} \setminus Z(\Sigma)$, the linear system is base-point free.\\
Now, we will show that the morphism it induces is birational onto its image. It suffices to show that the map is generically injective onto its image. Consider points in the open set $W=C\cap \{\frac{\partial f}{\partial x_i}\neq 0\}_{i=1,2,3,4}$. We will show that $W$ is nonempty and dense.

If $W=\varnothing$, then for $p\in C$, we have $\frac{\partial f}{\partial x_i}(p)=0$ for some $i\in \{1,2,3,4\}$. Since $C$ is irreducible, we must have $\frac{\partial f}{\partial x_i}(p)=0$ for all $p\in C$ for a fixed $i$. This shows that $V(f)\subset V(\frac{\partial f}{\partial x_i})$. However, this is not by weak Toric Nullstellensatz and degree reasons (pp.221-222 of \cite{cox2011toric}). By the same reasoning, $W$ must be dense; indeed, $C$ is irreducible so $C\cap V(\frac{\partial f}{\partial x_i})$ must be finite set of points if $C\cap V(\frac{\partial f}{\partial x_i})$ is not equal to $C$.

From the generalized Euler Identity, for $p\in C$, we have 
\begin{align}
    x_2(p)\frac{\partial f}{\partial x_2}(p)+x_4(p)\frac{\partial f}{\partial x_4}(p)&=0\label{eq:t1}\\
    x_1(p)\frac{\partial f}{\partial x_1}(p)-rx_2(p)\frac{\partial f}{\partial x_2}(p)+x_3(p)\frac{\partial f}{\partial x_3}(p)&=0\label{eq:t2}
\end{align}
For $p\in W=C\cap \{\frac{\partial f}{\partial x_i}\neq 0\}_{i=1,2,3,4}$, we have $x_2(p)\neq 0$. Indeed, if $x_2(p)=0$, by equation~\ref{eq:t1}, we must have $x_4(p)=0$ resulting $p$ to be in the exceptional set. Also, since $x_1(p)$ and $x_3(p)$ cannot be both zero (or else the point $p$ is in the exceptional set), for $p\in W$, we either have $x_1(p),x_2(p)\neq 0$ or $x_2(p),x_3(p)\neq 0$. In the quotient construction, we have $p=(a,b,c,d)\sim (\lambda a,\mu b, \lambda c,\lambda^r\mu d)$ for $\lambda,\mu\in \mathbb{C}^*$. Therefore, we have $W$ is covered by the open chart $U_{12}=\{x_1=x_2=1\}$ and $U_{23}=\{x_2=x_3=1\}$.

Now, for point $p\in W\cap U_{12}$, by considering 
    \[\{x_1(p)\frac{\partial f}{\partial x_1}(p),x_1(p)\frac{\partial f}{\partial x_3}(p),x_2(p)\frac{\partial f}{\partial x_2}(p), x_1^{r}(p)x_2(p)\frac{\partial f}{\partial x_4}(p)\},\] since $x_1(p)=x_2(p)=1$,
    \[x_4(p)=-(x_2(p)\frac{\partial f}{\partial x_2}(p))/(\frac{\partial f}{\partial x_4}(p))=-(x_2(p)\frac{\partial f}{\partial x_2}(p))/(x_1^r(p)x_2(p)\frac{\partial f}{\partial x_4}(p))\]
    Also, we have 
    \[x_3(p)=(rx_2(p)\frac{\partial f}{\partial x_2}(p)-x_1(p)\frac{\partial f}{\partial x_1}(p))/(\frac{\partial f}{\partial x_3}(p))\]
    \[=(rx_2(p)\frac{\partial f}{\partial x_2}(p)-x_1(p)\frac{\partial f}{\partial x_1}(p))/(x_1(p)\frac{\partial f}{\partial x_3}(p))\]
    Hence, we may conclude the morphism on $W\cap U_{12}$ is injective.
    
    The case for $p\in W\cap U_{23}$ is the same. Hence, we may conclude that the morphism induced by the linear system of $J_1(f)_{\beta}$
    is birational onto its image.
\end{proof}
\begin{remark}
    When $f$ is just smooth and not nondegenerate, $J_1(f)_{\beta}$ will have base points. We can resolve this issue by consider for example $J_1(f)_{\beta+rD_1}$. However, since dualities of toric Jacobian ring would not hold without non-degeneracy assumption, we stress that we can only consider the case when $f$ is nondegenerate. 
\end{remark}
\begin{remark}
    Since we have two generalized Euler identities, when restricted to the curve, the image of this morphism will be in $\mathbb{P}^{\dim J_1(f)_{\beta}-2}$
\end{remark}
\subsection{Estimate}
Now, we will use the same approach used in \cite{FavalePirola2022} and \cite{TorelliGonzalezAlonso2024} to calculate a bound for $D_J^{2\beta+2K_{\mathcal{H}_r}}$. Recall that $D$ is defined to be the image of the morphism $s:\mathbb{P}(S_{\beta+K_{\Sigma}})\times \mathbb{P}(S_{\beta+K_{\Sigma}})\rightarrow \mathbb{P}(S_{2\beta+2K_{\Sigma}})$; and $D^{2\beta+2K_{\Sigma}}_J$ is the intersection of $D$ with $\mathbb{P}(J_1(f)_{2\beta+2K_{\Sigma}})\subset \mathbb{P}(S_{2\beta+2K_{\Sigma}})$. From Theorem~\ref{thm:bound}, we want to establish that $\dim D_J^{2\beta+2K_{\mathcal{H}_r}}<g-1$.

Under the assumption that $J_1(f)_{2\beta+2K_{\mathcal{H}_r}}\neq 0$, take $g\in J_1(f)_{\beta}\backslash\{\mathbb{C}f\}$, consider the linear space
\[E_g=\{sf+tg|s,t\in S_{\beta+2K_{\mathcal{H}_r}}\}\]
It is obvious that $E_g\subset J_1(f)_{2\beta+2K_{\mathcal{H}_r}}$. 
\begin{lemma} Under the assumption that $J_1(f)_{2\beta+2K_{\mathcal{H}_r}}\neq 0$. For any $g\in J_1(f)_{\beta}\backslash \{\mathbb{C}f\}$, $sf+tg\neq 0$ in $S_{2\beta+2K_{\mathcal{H}_r}}$ for any $s,t\in S_{\beta+2K_{\mathcal{H}_r}}$. In particular, $\dim E_g=2\dim S_{\beta+2K_{\mathcal{H}_r}}$.
\end{lemma}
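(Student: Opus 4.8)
\emph{Proof proposal.} The plan is to show that the natural linear map
\[
S_{\beta+2K_{\mathcal{H}_r}}\oplus S_{\beta+2K_{\mathcal{H}_r}}\longrightarrow S_{2\beta+2K_{\mathcal{H}_r}},\qquad (a,b)\longmapsto af+bg,
\]
is injective (which is the content of the lemma; the assertion ``$af+bg\neq 0$'' is meant for $(a,b)\neq(0,0)$, and injectivity immediately gives $\dim E_g=2\dim S_{\beta+2K_{\mathcal{H}_r}}$). So I would suppose $af+bg=0$ with $(a,b)\neq(0,0)$. Since $S=\mathbb{C}[x_1,x_2,x_3,x_4]$ is a domain, $b=0$ forces $af=0$ hence $a=0$; thus both $a,b$ are nonzero and $af=-bg$ in $S$.

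The heart of the argument is that $f$ is, up to a nonzero constant, irreducible in the Cox ring $S$, and this is where nondegeneracy is used. First, $f$ is divisible by no $x_i$: if $x_i\mid f$ then $f$ vanishes identically on the orbit $O(\rho_i)$, so $C\cap O(\rho_i)$ is not of codimension $1$, contradicting nondegeneracy. Second, the curve $C=V(f)\subset\mathcal{H}_r$ is smooth (by nondegeneracy) and connected (being an ample divisor on the projective surface $\mathcal{H}_r$), hence irreducible and reduced. To transfer this to $S$ I would use the quotient presentation $\mathcal{H}_r=(\mathbb{C}^4\setminus Z(\Sigma))/G$ with $G\cong(\mathbb{C}^*)^2$ acting freely: $V(f)\cap(\mathbb{C}^4\setminus Z(\Sigma))$ is a $G$-torsor over $C$, hence smooth and connected, hence irreducible; since $Z(\Sigma)$ has codimension $2$ while $V(f)$ has pure codimension $1$, no component of $V(f)$ lies in $Z(\Sigma)$, so $V(f)$ equals the closure of that irreducible set and is therefore irreducible and (being a hypersurface that is generically reduced) reduced. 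Hence $f$ is a constant times an irreducible polynomial.

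With $f$ prime in the UFD $S$, from $f\mid bg$ I get $f\mid b$ or $f\mid g$. If $f\mid b$ then $\deg(b/f)=(\beta+2K_{\mathcal{H}_r})-\beta=2K_{\mathcal{H}_r}=-(2r+4)D_1-4D_2$, which is not an effective class (no monomial of $S$ has negative coordinates in the basis $D_1,D_2$), so $b/f=0$, contradicting $b\neq 0$. Therefore $f\mid g$; but $\deg g=\deg f=\beta$, so $g/f\in S_0=\mathbb{C}$ and $g\in\mathbb{C}f$, contradicting the hypothesis $g\in J_1(f)_\beta\setminus\{\mathbb{C}f\}$. This contradiction proves injectivity, and hence $\dim E_g=2\dim S_{\beta+2K_{\mathcal{H}_r}}$.

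The only genuinely nontrivial step is the irreducibility of $f$ in $S$ — more precisely, the passage from ``$C=V(f)$ is an irreducible reduced curve in $\mathcal{H}_r$'' to ``$f$ is irreducible in the Cox ring'' — and this is the step I expect to require the most care; everything else is a one-line UFD computation plus the elementary description of the effective cone of $\mathcal{H}_r$. If one prefers to avoid the torsor description, the same conclusion follows by setting $h=\gcd(f,g)$ and writing $af=-bg$ as $a=c\cdot(g/h)$, $b=-c\cdot(f/h)$ with $\deg c=2K_{\mathcal{H}_r}+\deg h$; since $h\mid f$ one has $0\le\deg h\le\beta$ coordinatewise, and irreducibility of $f$ together with $g\notin\mathbb{C}f$ forces $h$ to be a unit, so $\deg c=2K_{\mathcal{H}_r}$ is not effective and $c=0$.
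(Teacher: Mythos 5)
Your proof is correct and follows essentially the same route as the paper's: both arguments reduce to showing that $f\mid b$ forces $b=0$ because $S_{2K_{\mathcal{H}_r}}=0$, and that $f\nmid g$ by hypothesis, using the irreducibility of $Z=V(f)$. The only difference is that you make explicit the primality of $f$ in the Cox ring (via the torsor/closure argument), a point the paper uses implicitly when it passes from ``$V(b)\cap Z=Z$'' to ``$b$ is a multiple of $f$.''
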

\begin{proof}
Suppose we have $sf+tg=0$. Then, we may restrict it to the curve $Z=\{f=0\}$. Then, for $p\in Z$, we have $t(p)g(p)=0$. Then, $(V(g)\cap Z)\cup (V(t)\cap Z)=Z$. Since $g$ is not a multiple of $f$ by assumption, we have $V(t)\cap Z=Z$ by irreducibility of $Z$. Then, we have $t$ is a multiple of $f$. Since $t\in S_{\beta+2K_{\mathcal{H}_r}}=S_{\beta-[(2r+4)D_1+4D_2]}$ and $f\in S_{\beta}$, this is not possible unless $t=0$. In particular, we have $sf=0\in S_{2\beta+2K_{\mathcal{H}_r}}$. Since $f\neq 0$, we have $s=0$ as the Cox ring is a integral domain. 
\end{proof}
\begin{theorem}[Castelnuovo Uniform Position Theorem] The monodromy group of the universal hyperplane section of an irreducible curve $C\subset \mathbb{P}^r$ is the full symmetric group $S_n$ where $n$ is the number of points in the hyperplane sections.
\end{theorem}
\begin{proof}
    For a beautiful and detailed background and proof, see Chapter 11 of \cite{MR4898511}.
\end{proof}
\begin{theorem}
    Under the assumption that $J_1(f)_{2\beta+2K_{\mathcal{H}_r}}\neq 0$, for a general $g\in J_1(f)_{\beta}\backslash \{\mathbb{C}f\}$, $D_{J}^{2\beta+2K_{\mathcal{H}_r}}\cap \mathbb{P}(E_g)=\varnothing$. Hence, $\dim D_J^{2\beta+2K_{\mathcal{H}_r}}\le \dim \mathbb{P}J_1(f)_{2\beta+2K_{\mathcal{H}_r}}-\dim \mathbb{P}(E_g)$. 
    \label{thm:bb}
\end{theorem}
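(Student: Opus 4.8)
The plan is to prove the sharper statement that, for general $g$, the linear space $\mathbb{P}(E_g)$ is disjoint from the \emph{entire} decomposable locus $D$. Since $E_g\subseteq J_1(f)_{2\beta+2K_{\mathcal{H}_r}}$ and $D_J^{2\beta+2K_{\mathcal{H}_r}}=D\cap\mathbb{P}(J_1(f)_{2\beta+2K_{\mathcal{H}_r}})$, one has $D_J^{2\beta+2K_{\mathcal{H}_r}}\cap\mathbb{P}(E_g)=D\cap\mathbb{P}(E_g)$, so disjointness of $D$ from $\mathbb{P}(E_g)$ yields the first assertion; the dimension inequality then follows from the projective dimension theorem applied inside $\mathbb{P}(J_1(f)_{2\beta+2K_{\mathcal{H}_r}})$ (two disjoint closed subvarieties of a projective space have dimensions summing to strictly less than the ambient dimension). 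So suppose, for a general $g\in J_1(f)_\beta\setminus\{\mathbb{C}f\}$, that $[\alpha\gamma]\in D\cap\mathbb{P}(E_g)$, i.e. $\alpha\gamma=af+bg$ with $\alpha,\gamma\in S_{\beta+K_{\mathcal{H}_r}}$, $a,b\in S_{\beta+2K_{\mathcal{H}_r}}$ and $0\neq\alpha\gamma\in S_{2\beta+2K_{\mathcal{H}_r}}$.

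First I restrict this identity to the curve $Z=\{f=0\}$. Using $H^0(\mathcal{H}_r,K_{\mathcal{H}_r})=H^0(\mathcal{H}_r,2K_{\mathcal{H}_r})=0$, the restriction maps are injective on $S_{\beta+K_{\mathcal{H}_r}}$ and on $S_{\beta+2K_{\mathcal{H}_r}}$, and the former identifies $S_{\beta+K_{\mathcal{H}_r}}\cong H^0(Z,\omega_Z)$ (Remark~\ref{remark:genus}); also $g\notin\mathbb{C}f$ forces $\overline{g}\neq 0$. Then $\overline\alpha,\overline\gamma\neq 0$ (equivalent to $\alpha,\gamma\neq 0$), and $\overline b\neq 0$: if $b=0$ then $\alpha\gamma=af$, so $f\mid\alpha$ or $f\mid\gamma$ in the Cox ring, forcing $\alpha=0$ or $\gamma=0$ since $H^0(\mathcal{H}_r,K_{\mathcal{H}_r})=0$, a contradiction. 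Hence in $H^0(Z,\omega_Z^{\otimes 2})$ we get $\overline\alpha\,\overline\gamma=\overline b\,\overline g\neq 0$, so as effective divisors on $Z$,
\[(\overline g)_0\ \le\ (\overline\alpha)_0+(\overline\gamma)_0.\]
Now $\deg(\overline\alpha)_0=\deg(\overline\gamma)_0=\deg\omega_Z=2g-2$, whereas, by adjunction, $\deg(\overline g)_0=\beta^2=(2g-2)-\beta\cdot K_{\mathcal{H}_r}$; since $-K_{\mathcal{H}_r}$ is effective and $\beta$ is ample, $\beta\cdot K_{\mathcal{H}_r}<0$, so $\deg(\overline g)_0>2g-2$.

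Now apply Castelnuovo's Uniform Position Theorem. By Theorem~\ref{thm:birational}, $J_1(f)_\beta$ cuts out on $Z$ a base-point-free linear system whose morphism is birational onto its (irreducible, nondegenerate) image; hence for general $g$ the divisor $(\overline g)_0=p_1+\cdots+p_N$ (with $N=\beta^2$) is reduced and its points lie in uniform position — their monodromy over the parameter space of $g$'s is the full symmetric group on $N$ letters. Partition $\{p_1,\dots,p_N\}$ into $\Gamma_\alpha$ (points on $(\overline\alpha)_0$) and $\Gamma_\gamma$ (the remaining ones), of sizes $k$ and $N-k$; then $0\neq\overline\alpha\in H^0(\omega_Z(-\Gamma_\alpha))$ and $0\neq\overline\gamma\in H^0(\omega_Z(-\Gamma_\gamma))$. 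Uniform position makes $H^0(\omega_Z(-\Gamma'))$ depend only on $|\Gamma'|$ for $\Gamma'\subseteq(\overline g)_0$, and, combined with base-point-freeness of the restricted system (so that an appropriately sized subset of $(\overline g)_0$ is a general effective divisor on $Z$), it yields $H^0(\omega_Z(-\Gamma'))=0$ whenever $|\Gamma'|\ge g$. Therefore $k\le g-1$ and $N-k\le g-1$, so $N\le 2g-2$, contradicting $N>2g-2$. This shows $D\cap\mathbb{P}(E_g)=\varnothing$ for general $g$, hence $D_J^{2\beta+2K_{\mathcal{H}_r}}\cap\mathbb{P}(E_g)=\varnothing$ and the stated bound.

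The step I expect to be the main obstacle is the final input: that every $g$-element subset $\Gamma'$ of the general member of $|J_1(f)_\beta|$ imposes $g$ independent conditions on $|\omega_Z|$, i.e. $H^0(\omega_Z(-\Gamma'))=0$. Uniform position is a statement about the embedding defined by $J_1(f)_\beta|_Z$, so transferring it to the canonical system requires the extra observation that base-point-freeness lets $(\overline g)_0$ move enough for its small subsets to become general divisors of $Z$, and that the full-symmetric-group monodromy then propagates this genericity to all subsets of a given cardinality; care is needed here because $\dim|J_1(f)_\beta|_Z|$ can be smaller than $g$, so it is this monodromy propagation, rather than a naive count of moduli, that must do the work. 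The remaining ingredients — the two cohomology vanishings on $\mathcal{H}_r$, the adjunction computation of $\deg(\overline g)_0$, and the projective dimension theorem — are routine.
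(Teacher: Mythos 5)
Your argument is correct and its skeleton is the same as the paper's: restrict the hypothetical identity $\alpha\gamma=af+bg$ to $Z$, note that $\bar b\bar g=\bar\alpha\bar\gamma$ forces the reduced divisor $(\bar g)_0$ of degree $N=\beta^2=2g-2-\beta\cdot K_{\mathcal{H}_r}>2g-2$ to be split between the zero divisors of the two canonical forms $\bar\alpha,\bar\gamma$, and invoke uniform position for the birational embedding of Theorem~\ref{thm:birational} to get a contradiction. The genuine divergence is in how uniform position is used. The paper lifts the monodromy $\pi_1(U,g)\to\operatorname{Sym}(\{p_1,\dots,p_N\})$ to the incidence variety of whole decompositions $(a',b',g',\alpha',\gamma')$ with $a'f+b'g'=\alpha'\gamma'$, transports $\alpha$ along loops swapping points, and exhibits $m+1$ linearly independent $1$-forms, whence $m+1\le g$ against $2m\ge N>2g-2$. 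You instead move only subsets of the points and reduce everything to the single claim $h^0(\omega_Z(-\Gamma'))=\max(g-|\Gamma'|,0)$ for $\Gamma'\subseteq(\bar g)_0$, then conclude $|\Gamma_\alpha|,|\Gamma_\gamma|\le g-1$ and $N\le 2g-2$. Your route is cleaner in that it avoids worrying about irreducible components of, and the lift of monodromy to, the decomposition incidence variety; the cost is that you must actually prove the displayed claim. (Minor points: your ``sharper'' statement $D\cap\mathbb{P}(E_g)=\varnothing$ is not actually sharper since $E_g\subseteq J_1(f)_{2\beta+2K_{\mathcal{H}_r}}$ already gives $D\cap\mathbb{P}(E_g)=D_J^{2\beta+2K_{\mathcal{H}_r}}\cap\mathbb{P}(E_g)$; and disjointness in fact yields the strict bound $\dim D_J^{2\beta+2K_{\mathcal{H}_r}}\le\dim\mathbb{P}J_1(f)_{2\beta+2K_{\mathcal{H}_r}}-\dim\mathbb{P}(E_g)-1$, which implies the stated one.)

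On the step you flag as the main obstacle: your proposed mechanism (``small subsets of $(\bar g)_0$ become general divisors of $Z$'') is indeed not available, as you note, since $\dim\mathbb{P}(J_1(f)_\beta|_Z)$ is typically much smaller than $g$. But the claim does not need it; it follows from uniform position together with the single inequality $\deg(\bar g)_0>2g-2$ by the standard staircase argument. For general $g$ the covers $I_k\to U$ of $k$-subsets of $(\bar g)_0$ are finite with irreducible total space (full symmetric monodromy), so by upper semicontinuity the number $c(k)$ of conditions a $k$-subset imposes on $|\omega_Z|$ is well defined. If $c(k+1)=c(k)<g$ for some $k$, then for every $k$-subset $\Gamma'$ and every $p\in(\bar g)_0\setminus\Gamma'$ the point $p$ is a base point of the nonzero system $|\omega_Z(-\Gamma')|$; iterating over the complement gives $H^0(\omega_Z(-\Gamma'))=H^0(\omega_Z(-(\bar g)_0))=0$ because $\deg(\bar g)_0>2g-2$, contradicting $c(k)<g$. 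Hence $c(k)=\min(k,g)$, i.e. $h^0(\omega_Z(-\Gamma'))=\max(g-|\Gamma'|,0)$, and your contradiction $N=|\Gamma_\alpha|+|\Gamma_\gamma|\le 2g-2<N$ goes through. With this patch the proof is complete.
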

\begin{proof}
Write $\beta=[aD_1+bD_2]\in \operatorname{Pic}(\mathcal{H}_r)$. By Theorem~\ref{thm:birational}, we know that $J_1(f)_{\beta}\subset S_{\beta}\cong\Gamma(\mathcal{H}_r,aD_1+bD_2)$ is base point free and the morphism it induces is birational onto its image. In particular, we have a map $\psi:Z\rightarrow C\subset \mathbb{P}^r$ where $r\le \dim J_1(f)_{\beta}-2$ and $C$ is a curve that is not contained in any hyperplanes of $\mathbb{P}^r$. Take a generic $g\in J_1(f)_{\beta}\backslash \{\mathbb{C}f\}$ such that the support of $\operatorname{div}(g|_Z)$ is reduced and consists of distinct point $p_1+\cdots+p_n$ where $n=\deg \mathcal{O}_Z(aD_1+bD_2)=(aD_1+bD_2)^2$. Since $a>rb>0$ due to ampleness of $aD_1+bD_2$, 
\[n=(aD_1+bD_2)^2=2ab-rb^2> 2(b-1)(a-1-\frac{rb}{2})-2=2g(Z)-2.\]The support of $\operatorname{div}(g|_Z)$ is given by a pullback of hyperplane section of $\mathbb{P}^r$. Now, consider $U=\{h\in J_1(f)_{\beta}\backslash \{\mathbb{C} f\}| (h|_Z)\text{ is reduced }\}$. By Castelnuovo Uniform Position Theorem above, we know that the induced monodromy action by the fundamental group $\pi_1(U,g)\rightarrow \operatorname{Sym}(\{p_1,\cdots, p_n\})$ is surjective. For the sake of contradiction, assume the theorem doesn't hold. Then, there exist $s,t\in S_{\beta+2K_{\mathcal{H}_r}}$ such that $sf+tg=AB$ for some $A,B\in S_{\beta+K_{\mathcal{H}_r}}\backslash\{0\}$ and $AB\in  J_1(f)_{2\beta+2K_{\mathcal{H}_r}}$. When restricted to $Z$, we have 
\[\operatorname{div}(A|_Z)+\operatorname{div}(B|_Z)=\operatorname{div}(t|_Z)+\operatorname{div}(g|_Z)=\operatorname{div}(t|_Z)+p_1+\cdots+p_n.\]Without loss of generality, we may assume $\operatorname{div}(A|_Z)$ contains $p_1,\cdots, p_m$ where $m\ge \frac{n}{2}$ but doesn't contain $p_{m+1},\cdots, p_n$. Now, for $p_i$ where $1\le i\le m$, we know there is a $\rho\in \pi_1(U,g)$ whose induced action permutes $p_i$ and $p_{m+1}$. In particular, we may construct $A_i\in S_{\beta+K_{\mathcal{H}_r}}\cong \Gamma(\mathcal{H}_r,\beta+K_{\mathcal{H}_r})$ such that there are $s_i,t_i\in S_{\beta+2K_{\mathcal{H}_r}}$ and $B_i\in S_{\beta+K_{\mathcal{H}_r}}$ with $s_if+t_i g=A_iB_i$ and $A_i$ vanishes at $p_{m+1}$ and $p_j$ where $1\le j\le m$ and $j\neq i$ but not vanishing at $p_i$ (for a detailed construction see part (2) of Theorem 4.4 of \cite{TorelliGonzalezAlonso2024}). By adjunction formula, $A_i$'s restricted to $Z$ are holomorphic 1-forms on $Z$. Suppose we have $c_{m+1}A|_Z+\sum_{k=1}^m c_kA_k|_Z=0$ for constants $c_i\in \mathbb{C}$. Then, for $1\le i\le m$, we have $0=c_{m+1}A|_Z(p_i)+\sum_{k=1}^m c_kA_k|_Z(p_i)=c_iA_i|_Z(p_i)$. Since $A_i(p_i)\neq 0$, we have $c_i=0$ for $1\le i\le m$. Also, $0=c_{m+1}A|_Z(p_{m+1})+\sum_{k=1}^m c_kA_k|_Z(p_{m+1})=c_{m+1}A|_Z(p_{m+1})$ implies that $c_{m+1}=0$ as $A|_Z(p_{m+1})\neq 0$. Therefore, we have constructed $m+1$ linearly independent 1-forms. This implies that $m+1\le g(Z)$. However, $2m\ge n> 2g(Z)-2$ so we have a contradiction. 
\end{proof}
\subsection{Proof of Main Theorem}
We will now proceed to prove Theorem~\ref{A}.
\begin{proof}[Proof of Theorem \ref{A}]
Note that the assumption $J_1(f)_{2\beta+2K_{\mathcal{H}_r}}\neq 0$ is automatically satisfied for $\beta+K_{\mathcal{H}_r}$ ample.\\
By Theorem~\ref{thm:bound}, it suffices to establish $\dim D_J^{2\beta+2K_{\Sigma}}<g-1$.\\
By Theorem~\ref{thm:bb},
\[\dim D_J^{2\beta+2K_{\mathcal{H}_r}}\le  \dim \mathbb{P}J_1(f)_{2\beta+2K_{\mathcal{H}_r}}-\dim \mathbb{P}(E_g) \]
\[=\dim J_1(f)_{2\beta+2K_{\mathcal{H}_r}}-1-(\dim E_g-1) \]
\[=(\dim S_{2\beta+2K_{\mathcal{H}_r}}-\dim R_1(f)_{2\beta+2K_{\mathcal{H}_r}})-(2\dim S_{\beta+2K_{\mathcal{H}_r}})\]
By Theorem~\ref{thm:loyola}, we know that $\dim R_1(f)_{2\beta+2K_{\mathcal{H}_r}}=\dim R_1(f)_{\beta}$.
\[\dim R_1(f)_{\beta}=\dim S_{\beta}-\dim J_1(f)_{\beta}\]
We have 
\[\dim D_J^{2\beta+2K_{\mathcal{H}_r}}\le \dim J_1(f)_{\beta}+\dim S_{2\beta+2K_{\mathcal{H}_r}}-\dim S_{\beta}-2\dim S_{\beta+2K_{\mathcal{H}_r}}\]
Since $\beta+K_{\mathcal{H}_r}$ is ample, $h^i(H_r,\beta+2K_{H_r})=0$ for $i=1,2$ by Kodaira Vanishing Theorem. By Hirzebruch-Riemann-Roch, 
\[2\dim {S_{\beta+2K_{\mathcal{H}_r}}}=2h^0(\mathcal{H}_r,\beta+2K_{H_r})=2\chi(\beta+2K_{H_r})\]
\[=2(\frac{1}{2}(\beta+2K_{H_r})(\beta+K_{H_r})+1)=\beta^2+3\beta\cdot K_{\mathcal{H}_r}+2K_{\mathcal{H}_r}^2+2\]
Write $2\beta+2K_{\mathcal{H}_r}=K_{\mathcal{H}_r}+(\beta+K_{\mathcal{H}_r})+\beta$ where $\beta$ and $\beta+K_{\mathcal{H}_r}$ are ample, by Kodaira Vanishing Theorem and Hirzebruch-Riemann-Roch, 
\[\dim {S_{2\beta+2K_{\mathcal{H}_r}}}=h^0(\mathcal{H}_r,2\beta+2K_{H_r})=\chi(2\beta+2K_{H_r})\]
\[=\frac{1}{2}(2\beta+2K_{H_r})(2\beta+K_{H_r})+1=2\beta^2+3\beta\cdot K_{\mathcal{H}_r}+K_{\mathcal{H}_r}^2+1\]
By Demazure Vanishing (Theorem~\ref{thm:demazure}) and Hirzebruch-Riemann-Roch, 
\[\dim S_{\beta}=h^0(\mathcal{H}_r,\beta)=\chi(\beta)=\frac{1}{2}\beta\cdot (\beta-K_{\mathcal{H}_r})+1=\frac{1}{2}\beta^2-\frac{1}{2}\beta\cdot K_{\mathcal{H}_r}+1\]
Notice that $K^2_{\mathcal{H}_r}=12-n$ where $n$ is the number of two-dimensional cones in $\Sigma$ (See \cite{cox2011toric}), we have $K^2_{\mathcal{H}_r}=8$.
If we put everything together, 
\[\dim S_{2\beta+2K_{\mathcal{H}_r}}-\dim S_{\beta}-2\dim S_{\beta+2K_{\mathcal{H}_r}}\]
\[=2\beta^2+3\beta\cdot K_{\mathcal{H}_r}+K_{\mathcal{H}_r}^2+1-(\frac{1}{2}\beta^2-\frac{1}{2}\beta\cdot K_{\mathcal{H}_r}+1)-(\beta^2+3\beta\cdot K_{\mathcal{H}_r}+2K_{\mathcal{H}_r}^2+2)\]
\[=\frac{1}{2}\beta^2+\frac{1}{2}\beta\cdot K_{\mathcal{H}_r}-K^2_{\mathcal{H}_r}-2\]
\[=\frac{1}{2}\beta^2+\frac{1}{2}\beta\cdot K_{\mathcal{H}_r}+1-11=g-11\]
We have $\dim D_J^{2\beta+2K_{\Sigma}}<g-1$ as long as 
\[\dim J_1(f)_{\beta}+g-11<g -1\]
or 
\[\dim J_1(f)_{\beta}\le 9 \]
\end{proof}

\section{Computation}\label{sec:computation}
In this section, we will give an example of nondegenerate section $f$ of $dD_1+3D_2$ on $\mathcal{H}_1$ and show that $\dim J_1(f)_{\beta}=7$ for $\beta=[dD_1+3D_2]$. Therefore, by Theorem~\ref{A}, this establishes Corollary~\ref{B}. The set up is the following.

Let $\mathcal{H}_1$ be the Hirzebruch surface (or $\mathbb{P}^2$ blown up at one point), viewed as a toric variety. We work in the Cox ring $S = \mathbb{C}[x_1, x_2, x_3, x_4]$. The variables are graded by $\operatorname{Pic}(\mathcal{H}_1)$ according to the divisors $D_1$ (the fiber) and $D_2$ (the exceptional divisor):
\begin{align*}
    \deg(x_1) &= \deg(x_3) = [D_1] \\
    \deg(x_2) &= [D_2] \\
    \deg(x_4) &= [D_1 + D_2]
\end{align*}
Let $f \in S_{\beta}$ be a polynomial of degree $\beta = [dD_1 + 3D_2]$ given by $f=x_1^dx_2^3+x_3^{d-3}x_4^3+x_3^dx_2^3+x_1^{d-3}x_4^3$. We will first verify that $f$ is nondegenerate.
\begin{theorem} The polynomial $f=x_1^dx_2^3+x_3^{d-3}x_4^3+x_3^dx_2^3+x_1^{d-3}x_4^3$ is nondegenerate for $d\ge 5$.
\end{theorem}
\begin{proof}
    By Theorem~\ref{coxnondegenerate}, it suffices to show that $\langle x_1\frac{\partial f}{\partial x_1},x_2\frac{\partial f}{\partial x_2},x_3\frac{\partial f}{\partial x_3},x_4\frac{\partial f}{\partial x_4} \rangle$ have no common zero in $\mathbb{C}^4-Z$ where $Z=V(B(\Sigma))$ and $B(\Sigma)=\{x_3x_4,x_1x_4,x_1x_2,x_2x_3\}$.

    We first write \[ f = P(x_1, x_3)x_2^3 + Q(x_1, x_3)x_4^3 \]
where $P = x_1^d + x_3^d$ and $Q = x_1^{d-3} + x_3^{d-3}$.

Then, 
\begin{align}
    x_2\frac{\partial f}{\partial x_2} &= 3P x_2^3 \label{eq:e2} \\
    x_4\frac{\partial f}{\partial x_4} &= 3Q x_4^3 \label{eq:e4} \\
    x_1\frac{\partial f}{\partial x_1} &= dx_1^d x_2^3 + (d-3)x_1^{d-3}x_4^3 \label{eq:e1} \\
    x_3\frac{\partial f}{\partial x_3} &= dx_3^d x_2^3 + (d-3)x_3^{d-3}x_4^3 \label{eq:e3}
\end{align}
Assume for contradiction there exists a point $p = (x_1, x_2, x_3, x_4) \in \mathbb{C}^4 \setminus Z$ where all four logarithmic derivatives vanish. Because $p \notin Z$, we know $(x_1, x_3) \neq (0,0)$ and $(x_2, x_4) \neq (0,0)$.

From (\ref{eq:e2}) and (\ref{eq:e4}), the vanishing of $x_2\frac{\partial f}{\partial x_2}$ and $x_4\frac{\partial f}{\partial x_4}$ requires that either $x_2 = 0$ or $P(x_1, x_3) = 0$, and either $x_4 = 0$ or $Q(x_1, x_3) = 0$.

Suppose $x_2 = 0$. Since $(x_2, x_4) \neq (0,0)$, we must have $x_4 \neq 0$. Substituting $x_2 = 0$ into (\ref{eq:e1}) and (\ref{eq:e3}) yields:
\[ (d-3)x_1^{d-3}x_4^3 = 0 \quad \text{and} \quad (d-3)x_3^{d-3}x_4^3 = 0 \]
Since $x_4 \neq 0$, this forces $x_1 = 0$ and $x_3 = 0$. This implies $p \in Z$, contradicting our assumption. By symmetry, assuming $x_4 = 0$ forces $x_1 = x_3 = 0$, leading to the same contradiction.

Therefore, we are strictly forced to assume $x_2 \neq 0$ and $x_4 \neq 0$. This requires $P(x_1, x_3) = 0$ and $Q(x_1, x_3) = 0$. 

We must find a common root for $P = x_1^d + x_3^d = 0$ and $Q = x_1^{d-3} + x_3^{d-3} = 0$ where $(x_1, x_3) \neq (0,0)$. If $x_3 = 0$, then $x_1 = 0$, inducing that $p\in Z$. Assuming $x_3 \neq 0$, we set $x_3 = 1$, reducing the system to $x_1^d = -1$ and $x_1^{d-3} = -1$. 

Dividing these equations gives $x_1^3 = 1$. Substituting a third root of unity $x_1 = e^{2\pi i k / 3}$ (for $k \in \mathbb{Z}$) back into $x_1^d = -1$ yields $\frac{2kd}{3} = 2m+1$ for some integer $m$. This implies that $2kd=6m+3$ which is not possible because one is even and the other is odd. Therefore, $\langle x_1\frac{\partial f}{\partial x_1},x_2\frac{\partial f}{\partial x_2},x_3\frac{\partial f}{\partial x_3},x_4\frac{\partial f}{\partial x_4} \rangle$ have no common zero on $\mathbb{C}^4-Z$. This shows that $f$ is non-degenerate. 
\end{proof}
Now, we will compute the dimension of $J_1(f)_{\beta}$ for $\beta=[dD_1+3D_2]$.
\begin{theorem}\label{hand}
    For polynomial $f=x_1^dx_2^3+x_3^{d-3}x_4^3+x_3^dx_2^3+x_1^{d-3}x_4^3\in S_{\beta}$ where $d\ge 5$, $\dim J_1(f)_{\beta}=7$.
\end{theorem}
\begin{proof}
We first write 
\[f=P(x_1,x_3)x_2^3+Q(x_1,x_3)x_4^3\]
where $P=x_1^d+x_3^d$ and $Q=x_1^{d-3}+x_3^{d-3}$.

The Jacobian ideal $J_0(f)$ is generated by 
\begin{align*}
    x_2 \partial_{x_2}f &= 3P x_2^3 \\
    x_4 \partial_{x_4}f &= 3Q x_4^3 \\
    x_1 \partial_{x_1}f &= x_1^d x_2^3 + (d-3)x_1^{d-3} x_4^3 \\
    x_3 \partial_{x_3}f &= dx_3^d x_2^3 + (d-3)x_3^{d-3} x_4^3
\end{align*}
Notice that we have 
\[\frac{d}{3}x_2 \partial_{x_2}f+\frac{d-3}{3}x_4 \partial_{x_4}f=x_1 \partial_{x_1}f+x_3 \partial_{x_3}f\]
One sees that $J_0(f)=\langle x_1\frac{\partial}{\partial x_1}f,x_2\frac{\partial}{\partial x_2}f,x_4\frac{\partial}{\partial x_4}f \rangle$.
Note that $x_1x_2x_3x_4\in S_{\mu}$ where $\mu=[3D_1+2D_2]$ and recall that $J_1(f)=J_0(f)\colon \langle x_1x_2x_3x_4\rangle$. Take $\psi \in J_1(f)_{\beta}$, we have
\[\psi x_1x_2x_3x_4=A(x_1\frac{\partial}{\partial x_1}f)+B(x_2\frac{\partial}{\partial x_2}f)+C(x_4\frac{\partial}{\partial x_4}f)\]
where $A,B,C\in S_{\mu}$.

Since $\psi\in S_{\beta}$ and $\beta=[dD_1+3D_2]$, we may write it as
\[\psi = H_d x_2^3 + H_{d-1} x_2^2 x_4 + H_{d-2} x_2 x_4^2 + H_{d-3} x_4^3 \]
where $H_i\in \mathbb{C}[x_1,x_3]$.\\
Since $\deg(x_1)=\deg(x_3)=[D_1]$, $\deg(x_2)=[D_2]$ and $\deg(x_4)=[D_1+D_2]$, $H_i$ has degree $i$ respectively in $\mathbb{C}[x_1,x_3]$.
\[ \text{LHS} =  (x_1 x_3 H_d) x_2^4 x_4 + (x_1x_3H_{d-1}) x_2^3 x_4^2 + (x_1x_3H_{d-2} )x_2^2 x_4^3 + (x_1x_3H_{d-3}) x_2 x_4^4 \]
Since $A,B,C\in S_{\mu}$ where $\mu=[3D_1+2D_2]$, we may similarly write
\begin{align*}
    A &= a_3 x_2^2 + a_2 x_2 x_4 + a_1 x_4^2 \\
    B &= b_3 x_2^2 + b_2 x_2 x_4 + b_1 x_4^2 \\
    C &= c_3 x_2^2 + c_2 x_2 x_4 + c_1 x_4^2
\end{align*}
where $a_i,b_i,c_i$ have degree $i$ in $\mathbb{C}[x_1,x_3]$ for $i=1,2,3$.
Now, equating the two sides, by looking at the term involving $x_2^ix_4^j$ for $i+j=5$, we have the following six relations in $\mathbb{C}[x_1,x_3]$:
\begin{align}
    a_3x_1^d+3b_3(x_1^d+x_3^d)&=0\label{eq:z1}\\
    a_2x_1^d+3b_2(x_1^d+x_3^d)&=x_1x_3H_d\label{eq:z2}\\
    a_1x_1^d+3b_1(x_1^d+x_3^d)&=x_1x_3H_{d-1}\label{eq:z3}\\
    (d-3)a_3x_1^{d-3}+3c_3(x_1^{d-3}+x_3^{d-3})&=x_1x_3H_{d-2}\label{eq:z4}\\
    (d-3)a_2x_1^{d-3}+3c_2(x_1^{d-3}+x_3^{d-3})&=x_1x_3H_{d-3}\label{eq:z5}\\
    (d-3)a_1x_1^{d-3}+3c_1(x_1^{d-3}+x_3^{d-3})&=0\label{eq:z6}
\end{align}
From equations~\ref{eq:z1} and \ref{eq:z6}, because $d-3\ge 2$ and $a_i,b_i$ and $c_i$ have degree $i$ in $\mathbb{C}[x_1,x_3]$, we have $a_1=c_1=a_3=b_3=0$. This reduces the above equations to  
\begin{align}
    a_2x_1^d+3b_2(x_1^d+x_3^d)&=x_1x_3H_d\label{eq:y2}\\
    3b_1(x_1^d+x_3^d)&=x_1x_3H_{d-1}\label{eq:y3}\\
    3c_3(x_1^{d-3}+x_3^{d-3})&=x_1x_3H_{d-2}\label{eq:y4}\\
    (d-3)a_2x_1^{d-3}+3c_2(x_1^{d-3}+x_3^{d-3})&=x_1x_3H_{d-3}\label{eq:y5}
\end{align}
From equation~\ref{eq:y3}, we see that $x_1x_3$ must divide $b_1$ but $b_1$ has degree $1$ so $b_1=0$ implying $H_{d-1}=0$.\\
From equation~\ref{eq:y4}, we see that $x_1x_3$ must divide $c_3$ so $c_3=x_1x_3(\alpha_1 x_1+\alpha_2 x_3)$ since $c_3$ has degree $3$ where $\alpha_i\in \mathbb{C}$ for $i=1,2$ are two free parameters. Therefore, $H_{d-2}=3(\alpha_1x_1+\alpha_2x_2)(x_1^{d-3}+x_3^{d-3})$.\\
From equations~\ref{eq:y2} and \ref{eq:y5}, we see that $x_1$ must divides $b_2$ and $c_2$ and $x_3$ must divide $a_2+3b_2$ and $(d-3)a_2+3c_2$.\\
Write 
\[a_2=\gamma_1x_1^2+\gamma_2x_1x_3+\gamma_3x_3^2\]
where $\gamma_i\in \mathbb{C}$ for $i=1,2,3$ are free parameters. We see that 
\begin{align*}
    b_2&=\frac{-1}{3}\gamma_1x_1^2+\delta_1x_1x_3\\
    c_2&=-\frac{(d-3)}{3}\gamma_1 x_1^2+\delta_2 x_1x_3
\end{align*}
where $\delta_i$ for $i=1,2$ are also free parameters.
In particular, we have 
\begin{align*}
    H_d&=-\gamma_1x_1x_3^{d-1}+\gamma_2x_1^d+\gamma_3x_1^{d-1}x_3+3\delta_1(x_1^d+x_3^d)\\
    H_{d-1}&=0\\
    H_{d-2}&=3(\alpha_1x_1+\alpha_2x_2)(x_1^{d-3}+x_3^{d-3})\\
    H_{d-3}&=-(d-3)\gamma_1x_1^2x_3^{d-3}+(d-3)\gamma_2x_1^{d-3}+(d-3)\gamma_3x_1^{d-4}x_3+3\delta_2(x_1^{d-3}+x_3^{d-3})
\end{align*}
Notice that we have in total 7 free parameters and they are $\alpha_1,\alpha_2,\gamma_1,\gamma_2,\gamma_3,\delta_1,\delta_2$. This enables us to conclude that $\dim J_1(f)_{\beta}=7$.
\end{proof}
\begin{remark} On a Hirzebruch surface $\mathcal{H}_r$ where $r\ge 1$, take $f$ to be a section of an ample section $\beta$ such that $\beta+K_{\mathcal{H}_r}$ is also ample. $J_1(f)_{\beta}$ contains
\[J(f)_{\beta}=\{a\frac{\partial f}{\partial x_1}+b\frac{\partial f}{\partial x_2}+c\frac{\partial f}{\partial x_3}+d\frac{\partial f}{\partial x_4}|a,c\in S_{[D_1]}, b\in S_{[D_2]},d\in S_{[rD_1+D_2]}\}\]
Now, $S_{[D_1]}=\{x_1,x_3\}$, $S_{[D_2]}=\{x_2\}$ and $S_{[D_4]}=\{x_1^kx_3^{r-k}x_2,x_4\}_{0\le k\le r}$. By the generalized Euler identity (Theorem~\ref{thm:euler}), there is one relation among them, therefore, we have 
\[\dim J(f)_{\beta}=2+2+1+r+2-1=r+6\]
In particular, $\dim J_1(f)_{\beta}\ge r+6$. When the toric surface has Picard rank at least 2, the precise relationship between $J_1(f)_{\beta}$ and $J(f)_{\beta}$ is not known. However, we are aware that using algorithm involving Gr\"{o}bner basis (for example a modified version of the one described in \cite{MR2290010}), one can calculate $\dim J_1(f)_{\beta}$ explicitly by hand or using Macaulay 2\cite{M2} once an explicit equation is provided. 
\end{remark}

\printbibliography
\vspace*{1cm}
{\raggedright
\small Department of Mathematics, University of California San Diego, La Jolla, CA 92093 \\
\small Email: \href{mailto:your.email@example.com}{jiz185@ucsd.edu}
}

\end{document}